\newlist{todolist}{itemize}{2}
\setlist[todolist]{label=$\square$}
\theoremstyle{plain}
\newtheorem{mainthm}{Theorem}
\newtheorem{thm}{Theorem}[section]
\newtheorem{lem}[thm]{Lemma}
\newtheorem{prop}[thm]{Proposition}
\theoremstyle{definition}
\newtheorem{rem}[thm]{Remark}
\newcommand{\id}{\mathrm{id}}
\newcommand{\gr}{\mathrm{gr}}
\newcommand{\fg}{\mathfrak{g}}
\newcommand{\fsl}{\mathfrak{sl}}
\newcommand{\Z}{\mathbb Z}
\newcommand{\C}{\mathbb C}
\newcommand{\N}{\mathbb N}
\newcommand{\PP}{{\mathbb P}}
\newcommand{\QQ}{{\mathbb Q}}
\newcommand{\OO}{\mathcal{O}}
\newcommand{\FF}{\mathcal{F}}
\newcommand{\EE}{\mathcal{E}}
\newcommand{\W}{\mathcal{W}}
\newcommand{\cZ}{\mathcal{Z}}
\newcommand{\NN}{\mathcal{N}}
\newcommand{\V}{\mathcal{V}}
\newcommand{\LL}{\mathcal{L}}
\newcommand{\GGG}{\mathcal{G}}
\newcommand{\KK}{\mathcal{K}}
\newcommand{\Q}{\mathcal{Q}}
\newcommand{\A}{\mathcal{A}}
\newcommand{\B}{\mathcal{B}}
\newcommand{\op}{\oplus}
\newcommand{\ot}{\otimes}
\renewcommand{\ge}{\geqslant}
\renewcommand{\le}{\leqslant}
\DeclareMathOperator{\HH}{H}
\DeclareMathOperator{\Hom}{Hom}
\DeclareMathOperator{\im}{Im} 
\DeclareMathOperator{\rk}{rk}
\DeclareMathOperator{\Ker}{Ker} 
\DeclareMathOperator{\coker}{Coker}
\DeclareMathOperator{\SL}{\mathrm{SL}}
\begin{document}

\sloppy

\title[]{A construction of equivariant bundles\\ on the space of symmetric forms}

\author{Ada Boralevi}
\address{Dipartimento di Scienze Matematiche \lq\lq G. L. Lagrange\rq\rq, (Dipartimento di Eccellenza 2018--2022), Politecnico di Torino, Corso Duca degli Abruzzi 24, 10129 Torino, Italy}
\email{\href{mailto:ada.boralevi@polito.it}{ada.boralevi@polito.it}}

\author{Daniele Faenzi}
\address{Institut de Math\'ematiques de Bourgogne,
UMR CNRS 5584,
Universit\'e de Bourgogne et Franche Comt\'e,
9 Avenue Alain Savary,
BP 47870,
21078 Dijon Cedex,
France}
\email{\href{mailto:daniele.faenzi@u-bourgogne.fr}{daniele.faenzi@u-bourgogne.fr}}

\author{Paolo Lella}
\address{Dipartimento di Matematica, Politecnico di Milano, Via Bonardi 9,  20133 Milano, Italy}
\curraddr{}
\email{\href{mailto:paolo.lella@polimi.it}{paolo.lella@polimi.it}}
\urladdr{\url{http://www.paololella.it/}}

\thanks{The first named author is supported by ``Politecnico di Torino - Starting grant per giovani ricercatori a TD A e B''. All authors are members of GNSAGA}

\subjclass[2010]{14J60, 14L30, 15A30, 16G20}

\keywords{Stable vector bundles, symmetric forms, group action, equivariant resolution, constant rank matrix, homogeneous bundle, homogeneous variety, quiver representation.}

\begin{abstract}
We construct stable vector bundles on the space $\PP(S^d \C^{n+1})$
of symmetric forms of degree $d$ in $n+1$ variables which are
equivariant for the action of $\SL_{n+1}(\C)$, and admit an equivariant
free resolution of length $2$.
For $n=1$, we obtain new examples of stable vector bundles of rank $d-1$ on
$\PP^d$, which are moreover equivariant for $\SL_2(\C)$. The
presentation matrix of these bundles attains
Westwick's upper bound for the dimension of vector spaces of matrices
of constant rank and fixed size.
\end{abstract}

\maketitle

\section*{Introduction}

It is notoriously difficult to construct rank-$r$ non-splitting vector bundles (i.e.~not
isomorphic to the direct sum of line bundles) on
$\PP^N$ if $r$ is small with respect to $N$. A famous conjecture of
Hartshorne entails that the task is in fact impossible when $r < N/3$.
As for the meaning of the word \textit{small} here, it basically
refers to any value of $r \le N-1$ (say for $N \ge 4$). For instance, no example of non-splitting vector bundle of rank $r \le N-3$ is
known, at least if we work in characteristic zero, which we tacitly
assume from now on.
Moreover, only two sporadic (yet quite important)
constructions are available for $r = N-2$, that are due to
Horrocks-Mumford (for $N=4$) and Horrocks (for $N=5$). Once such
a vector bundle is constructed, one can pull it back via any
finite self-map of $\PP^N$ to obtain a new bundle. Together with the
method of affine pull-backs developed by Horrocks
(cf.~\cite{horrocks:rank-three}, see also \cite{ancona-ottaviani:horrocks}), this
essentially exhausts the set of techniques currently available, to the best of our knowledge.

The situation improves slightly when $r=N-1$. In this range, basically two
classes of bundles are known: instanton bundles and Tango
bundles (we refer to \cite{OSS} for a treatment of these classical cases). These have been
recently generalized by Bahtiti,
cf.~\cite{bahtiti-correlation,bahtiti:tango,bahtiti:2n+1}. More
examples are given by Cascini's weighted Tango bundles,
see \cite{cascini:tango}, and by the Sasakura bundle of rank $3$ on $\PP^4$,
cf.~\cite{anghel:sasakura}.
That is roughly the list of all the examples known so far in this realm.

From another perspective, one may try to construct vector bundles
starting from their presentation matrix. Such matrix will have
constant corank $r$ when evaluated at any point of $\PP^N$. While the
opposite procedure (constructing a matrix from a bundle) is also interesting, as we tried to show in
\cite{bo_fa_me,adp}, it is not quite clear how to construct matrices
of constant corank $r \le N-1$, especially so if we impose constraints
on the matrix, for instance that it should have a given size, say $a
\times b$, or that
its coefficients should have a fixed degree, notably degree one.
A simple calculation (cf.~\cite{Westwick1}) implies
(say $a \le b$) that such a matrix
can exist a priori only if $r+1$ divides $(a-1)!/(b-r-1)!$. Under such
divisibility condition, some matrices of linear forms of size
$(b-r+1)\times(b-r+n-1)$ and constant corank $n-1$ where given in
\cite{Westwick_5}, attaining the upper bound 
for the dimension of vector spaces of matrices
of constant rank and fixed size. The construction
is a bit obscure to us, and in any case it says very little about the bundle itself.

The goal of this paper is to introduce a simple technique to
construct non-splitting vector bundles on $\PP^N$. For this, one has to view
$\PP^N$ as the space of homogeneous forms of degree $d$ on $\PP^n$ for
some $(d,n)$, and use a little representation theory of
$G=\SL_{n+1}(\C)$. 

The resulting bundles satisfy a much stronger property than just being
non-splitting, namely they are stable in sense of Mumford-Takemoto, or slope-stable.
Also, by construction they are homogeneous for the action of $G$ and again by definition their dual bundles are presented by a matrix of linear forms which is
equivariant for $G$.

For $n=1$, one has $N=d$ and our bundles have rank $d-1$. As we will
see, for $d \ge 4$ these bundles turn out to
be different from all the bundles of rank $d-1$ on $\PP^d$ constructed
so far (except for the single case of the classical Tango bundle).

Also, the matrix of linear
forms will have size $(b-r+1)\times(b-r+n-1)$ and constant corank
$n-1$, thus giving a new approach to achieve Westwick's bound.

Finally, for $n=1$ and $d=3$, our bundles agree with the
$\SL_2(\C)$-invariant instantons defined and studied in
\cite{dani_ist_omog}. These instantons are parametrized by $\N$ in
the sense that the second Chern class (the so-called the ``charge'')
of an $\SL_2(\C)$-invariant instanton over $\PP^3=\PP(V_3)$ must equal
${m\choose 2}$ for
some integer $m \ge 2$, and given such $m$ there is one and only
one such instanton. This instanton is precisely $\W_{m,3}$.
However our results generalize the construction 
and simplify some of the proofs given in that paper.

For higher $n$, our bundles have rank bigger than the dimension
$N=\binom{d+n}{n}-1$ of the
ambient space. Nevertheless, they seem interesting as are they stable, homogeneous
for the action of a rather big group operating on $\PP^N$, but still
of much smaller rank than most $\SL_{N+1}(\C)$-homogeneous
bundles. To study them we will pull back to $\PP^n$ via the Veronese map and use the theory of
$\SL_{n+1}(\C)$-homogeneous bundles in terms of quiver representations developed in \cite{OR}.

\medskip

We now state our results more precisely.
For an integer $n \ge 1$, let $V$ be a complex vector space of
dimension $n+1$ and let $G \simeq \SL_{n+1}(\C)$ denote
the general linear group of automorphisms of $V$.
The representation theory of $G$ is governed by the 
fundamental 
weights $\lambda_1,\ldots,\lambda_{n}$ of $G$, in the sense that an irreducible representation of $G$ is uniquely determined by
its dominant weight $\lambda = a_1\lambda_1 + \cdots + a_n \lambda_n$,
where $a_i \in \N$ for all $i$. We write $V_\lambda$ for this
representation. By convention, the standard representation is  $V_{\lambda_1}$
and we often write $V=V_{\lambda_1}$.
We also abbreviate $V_d$ for $V_{d \lambda_1}$.

Now suppose $n \ge 2$, and take integers $d \le e$.  The Littlewood-Richardson rule gives:
\begin{equation}\label{tensor prod dec n}
V_d \ot V_e \simeq \bigoplus_{i=0}^{d} V_{(d+e-2i)\lambda_1 + i \lambda_2}.
\end{equation}
From \eqref{tensor prod dec n} we extract a $G$-invariant non-zero morphism:
\[
V_d \ot V_e \to V_{(d+e-2)\lambda_1 + \lambda_2}.
\]
Assume that  $e$ is a multiple of $d$, say $e=(m-1)d$; the inclusion of the second summand $V_{(md-2)\lambda_1 + \lambda_2}$ 
from the decomposition \eqref{tensor prod dec n} into the product $V_d \ot V_{(m-1)d}$ induces a $G$-equivariant morphism
\[
\Phi_{m,d}: V_{(md-2)\lambda_1+\lambda_2} \ot \OO_{\PP V_d} \longrightarrow V_{(m-1)d} \ot \OO_{\PP V_d}(1),
\]
which is a matrix of linear forms. We put
$\W_{m,d}:=\Ker(\Phi_{m,d})$. 

\medskip

The sheaves $\W_{m,d}$ constitute the main object of study of this paper. Here is our main result about them.

\begin{mainthm}\label{risultato generale}
Let $d \ge 2$ and $m \ge 2$ be integers. Then
$\Phi_{m,d}$ has constant corank $1$ and $\W_{m,d}$  is a slope-stable $\SL_{n+1}(\C)$-equivariant vector bundle on $\PP V_d$ of rank
\[
\rk(\W_{m,d})=(md-1)\tbinom{md+n-1}{n-1} - \tbinom{(m-1)d+n}{n} + 1
\]
that fits into:
\begin{equation}\label{ex seq}
0 \to  \W_{m,d} \to  V_{(md-2)\lambda_1+\lambda_2} \ot \OO_{\PP V_d} \xrightarrow{\Phi_{m,d}} V_{(m-1)d} \ot \OO_{\PP V_d}(1) \xrightarrow{\Psi_{m,d}} \OO_{\PP V_d}(m) \to 0,
\end{equation}
\end{mainthm}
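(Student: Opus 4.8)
The plan is to first pin down $\Phi_{m,d}$ and $\Psi_{m,d}$ as the $G$-equivariant bundle maps induced, respectively, by the inclusion $\iota\colon V_{(md-2)\lambda_1+\lambda_2}\hookrightarrow V_d\ot V_{(m-1)d}$ and by the symmetric multiplication $\mu\colon V_d\ot V_{(m-1)d}\to V_{md}$, each read off against the tautological section on $\PP V_d$. Since their entries are linear, both maps are encoded by the equivariant tensors $\iota$ and $\mu$, so they are $G$-equivariant by construction; equivariance of $\W_{m,d}$ is then automatic once it is locally free. I would next check that \eqref{ex seq} is a complex, i.e.\ $\Psi_{m,d}\circ\Phi_{m,d}=0$, purely representation-theoretically: both the composite and the zero map are $G$-morphisms $V_{(md-2)\lambda_1+\lambda_2}\ot\OO_{\PP V_d}\to\OO_{\PP V_d}(m)$, and Schur's lemma together with the decomposition \eqref{tensor prod dec n} forces the composite to vanish, since $V_{(md-2)\lambda_1+\lambda_2}$ is exactly the summand of $V_d\ot V_{(m-1)d}$ killed by $\mu$.

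\textbf{Constant corank via the closed orbit.} The heart of the argument is the constant corank $1$ claim, equivalently the exactness of \eqref{ex seq} with $\coker\Phi_{m,d}=\OO_{\PP V_d}(m)$. Put $\rho=\dim V_{(m-1)d}-1$. From $\Psi_{m,d}\Phi_{m,d}=0$ and $\Psi_{m,d}\neq 0$ one gets $\rk\Phi_{m,d}(\xi)\le\rho$ at every point, so the only issue is the reverse bound. Here I would exploit that the degeneracy locus $D=\{\xi:\rk\Phi_{m,d}(\xi)<\rho\}$ and the zero locus of $\Psi_{m,d}$ are closed and $G$-invariant. Since $\PP V_d=\PP(V_{d\lambda_1})$ is the projectivization of an \emph{irreducible} representation, it contains a unique closed $G$-orbit, namely the Veronese variety of highest-weight lines $[\ell^d]$, and every nonempty closed invariant subset meets it. Thus it suffices to verify, at the single highest-weight point $[x_0^d]$, that $\Psi_{m,d}\neq 0$ and that $\coker\Phi_{m,d}$ is one-dimensional. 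At this point the stabilizer is a parabolic, so both maps are equivariant for its Levi factor and in particular for the maximal torus; hence $\Phi_{m,d}(x_0^d)$ carries each weight space of $V_{(md-2)\lambda_1+\lambda_2}$ into a single weight space of $V_{(m-1)d}$, shifted by $d\lambda_1$. The corank is then a finite weight count, and I expect exactly one target weight to be missed, giving corank $1$; this is the one genuinely computational lemma. Exactness of \eqref{ex seq} and local freeness of $\W_{m,d}$ follow at once.

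\textbf{Rank and stability.} Granted exactness, the rank formula is immediate from $\rk\W_{m,d}=\dim V_{(md-2)\lambda_1+\lambda_2}-\dim V_{(m-1)d}+1$ combined with the Weyl dimension formula, which yields $\dim V_{(md-2)\lambda_1+\lambda_2}=(md-1)\binom{md+n-1}{n-1}$ and $\dim V_{(m-1)d}=\binom{(m-1)d+n}{n}$. For slope-stability I would use that the maximal destabilizing subsheaf is canonical, hence $G$-equivariant, and rule it out via Hoppe's criterion: this reduces stability to the vanishing $\HH^0\big(\PP V_d,(\wedge^q\W_{m,d})(k)\big)=0$ for $1\le q\le\rk\W_{m,d}-1$ and the relevant normalizing twists $k$ (note $\mu(\W_{m,d})=(m-\dim V_{(m-1)d})/\rk\W_{m,d}<0$). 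Taking exterior powers of the two short exact sequences extracted from \eqref{ex seq} resolves $\wedge^q\W_{m,d}$ by sums of line bundles $\OO_{\PP V_d}(j)$, so these groups are computable by Bott's formula on $\PP^N$, organized as $G$-representations; for $n\ge 2$ the quiver description of \cite{OR}, obtained after pulling back along the Veronese map, should streamline the bookkeeping.

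\textbf{Main obstacle.} I expect the stability step to be the principal difficulty: although each cohomology group is computable term by term, proving that \emph{every} relevant $\HH^0\big((\wedge^q\W_{m,d})(k)\big)$ vanishes means controlling a long alternating sum of representations, and this is where the argument is most likely to demand care (or a cleaner equivariant replacement for Hoppe's criterion). The weight computation at $[x_0^d]$ underlying constant corank is the other delicate point, but of a more elementary, finite nature.
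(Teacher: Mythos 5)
Your closed-orbit reduction and your rank count are sound, but the proposal contains one argument that is actually incorrect, and it leaves the two load-bearing steps unproved. The incorrect step is the proof that \eqref{ex seq} is a complex. By Schur's lemma the composite $\Psi_{m,d}\circ\Phi_{m,d}$ is classified by $\Hom(V_{(md-2)\lambda_1+\lambda_2},S^mV_d)^G$, and $S^mV_d=S^m(S^dV)$ is a \emph{plethysm}: it is not a subquotient of $V_d\ot V_{(m-1)d}$, and it contains many irreducible summands besides the Cartan piece $V_{md}$. The fact that $V_{(md-2)\lambda_1+\lambda_2}$ is killed by the multiplication $\mu\colon V_d\ot V_{(m-1)d}\to V_{md}$ only shows that the component of the composite along $V_{md}\subset S^mV_d$ vanishes, which is equivalent to saying that the composite vanishes after restriction to the Veronese variety $X_d$; it says nothing about the components lying in the degree-$m$ part of the ideal of $X_d$. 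What is actually needed is that $V_{(md-2)\lambda_1+\lambda_2}$ does not occur in $S^m(S^dV)$ at all, and this is precisely the content of Lemma \ref{composizione nulla} of the paper: the unique weight-$\big((md-1)L_1+L_2\big)$ monomial in $S^mV_d$ is $y_1^{m-1}y_2$, and $E_{12}(y_1^{m-1}y_2)=y_1^m\neq0$, so no copy of $V_{(md-2)\lambda_1+\lambda_2}$ can occur. Without this plethysm computation (or an equivalent one) your claim $\Psi_{m,d}\circ\Phi_{m,d}=0$ is unproved.

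The remaining gaps are the steps you explicitly defer. First, the reduction of constant corank to a single point of the closed orbit is fine (it is the paper's own ``continuity and $G$-equivariance'' remark), but the pointwise statement at $[x_0^d]$ --- that exactly one weight space of $V_{(m-1)d}$ is missed --- is where the entire difficulty sits: it amounts to showing that $x_0^d\ot x^{(k)}$ has nonzero projection onto the summand $V_{(md-2)\lambda_1+\lambda_2}$ of $V_d\ot V_{(m-1)d}$ for \emph{every} monomial $x^{(k)}\neq x_0^{(m-1)d}$, and you offer no argument (the weight spaces of $V_{(md-2)\lambda_1+\lambda_2}$ have multiplicity bigger than one, so this is not formal). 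The paper performs such an explicit matrix computation only in the case $n=1$ of Theorem \ref{risultato SL2}; for Theorem \ref{risultato generale} it argues sheaf-theoretically on $\PP^n$, proving that $\phi_{m,d}$ surjects onto $\LL_{m,d}=\Ker\psi_{m,d}$ because $\LL_{m,d}$ is indecomposable and a failure of surjectivity would force, via the quiver description of homogeneous bundles, a splitting $\LL_{m,d}\simeq \GGG^{k-1}\oplus\EE^{k-1}$. Second, for stability you invoke Hoppe's criterion, which is only a sufficient condition; for a bundle of rank $(md-1)\tbinom{md+n-1}{n-1}-\tbinom{(m-1)d+n}{n}+1$ there is no indication that the vanishings $\HH^0\big((\wedge^q\W_{m,d})(k)\big)=0$ hold in the required range, nor any feasible way to verify them, so this step is speculative. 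The paper's route is genuinely different: pull back to $\PP^n$, apply \cite[Theorem 7.2]{OR} to reduce slope-stability to King's $\mu$-stability of the quiver representation $[\V_{m,d}]$, and check the latter by a finite slope comparison among the $A_p$-type and hook-type subrepresentations. (Your observation that the maximal destabilizing subsheaf is canonical, hence equivariant, points in the right direction --- it is in effect what the quiver formalism exploits --- but coupled to Hoppe's criterion it does no work.) As it stands, the proposal establishes neither exactness in the middle of \eqref{ex seq} nor slope-stability.
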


\medskip

If $n=1$ (so that $V \simeq \C^2$) formula \eqref{tensor prod dec n} simplifies significantly and reads:
\begin{equation}\label{tensor prod dec 2}
V_d \ot V_{(m-1)d} =V_{md} \op V_{md-2} \op \ldots \op V_{(m-2)d}.
\end{equation}
The second summand is just the symmetric power $V_{md-2}$, and the space of symmetric forms $\PP V_d \simeq \PP^d$. 
The following analogue of Theorem \ref{risultato generale} is our
second result.

\begin{mainthm}\label{risultato SL2}
Let $d \ge 2$ and $m \ge 2$ be integers. Then $\W_{m,d}$ is a slope-stable  vector
bundle of rank $d-1$ on $\PP^d$, homogeneous under the action of
$\SL_2(\C)$, fitting into:
\begin{equation}\label{ex seq sl2} 
0 \rightarrow \W_{m,d} \rightarrow  V_{md-2} \ot \OO_{\PP^d} \xrightarrow{\Phi_{m,d}} V_{(m-1)d} \ot \OO_{\PP^d}(1) \xrightarrow{\Psi_{m,d}} \OO_{\PP^d}(m) \rightarrow 0.
\end{equation}
\end{mainthm}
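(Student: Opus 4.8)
The plan is to treat Theorem \ref{risultato SL2} as the $n=1$ incarnation of Theorem \ref{risultato generale}, exploiting the three special features of $\SL_2(\C)$: the decomposition \eqref{tensor prod dec 2} is multiplicity-free, each $V_k=S^kV$ is self-dual, and $\PP V_d\simeq\PP^d$. First, $\SL_2(\C)$-homogeneity is automatic: $\Phi_{m,d}$ comes from an $\SL_2(\C)$-morphism, so $\W_{m,d}=\Ker\Phi_{m,d}$ is an equivariant subsheaf. The rank is obtained by specializing the formula of Theorem \ref{risultato generale} at $n=1$: since $\binom{md+n-1}{n-1}=1$ and $\binom{(m-1)d+n}{n}=(m-1)d+1$, we get $\rk\W_{m,d}=(md-1)-((m-1)d+1)+1=d-1$. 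It then remains to prove (i) that $\Phi_{m,d}$ has constant corank $1$, (ii) exactness of \eqref{ex seq sl2} with cokernel $\OO(m)$, and (iii) slope-stability.

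For (i) I would make the fibre map explicit. Under the self-duality $V_k\simeq V_k^*$, evaluating $\Phi_{m,d}$ at a point $[f]\in\PP V_d$ (with $f\in S^dV\setminus\{0\}$) is, up to transpose, the first transvectant
\[
T_f\colon V_{(m-1)d}\longrightarrow V_{md-2},\qquad g\longmapsto [f,g]_1=f_x\,g_y-f_y\,g_x,
\]
so that $\rk\Phi_{m,d}([f])=\rk T_f$ and constant corank $1$ is equivalent to $\dim\Ker T_f=1$ for every $f\ne 0$. Now $[f,f^{m-1}]_1=0$, hence $f^{m-1}\in\Ker T_f$; conversely $[f,g]_1=0$ means that the Jacobian of $(f,g)$ vanishes identically, so $g$ is a function of $f$, and by homogeneity with $\deg g=(m-1)\deg f$ this forces $g\in\langle f^{m-1}\rangle$. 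Thus $\Ker T_f=\langle f^{m-1}\rangle$ is one-dimensional for all $f\ne0$, giving $\rk\Phi_{m,d}([f])=(m-1)d$ and $\dim\Ker\Phi_{m,d}([f])=d-1$ at every point. Because $\Phi_{m,d}$ has constant rank, both $\W_{m,d}=\Ker\Phi_{m,d}$ and $\coker\Phi_{m,d}$ are locally free, of ranks $d-1$ and $1$, which settles (i) and the vector bundle claim.

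For (ii), the complex condition $\Psi_{m,d}\circ\Phi_{m,d}=0$ holds automatically, since $V_{md-2}$ does not occur in $H^0(\OO(m))=S^m(V_d^*)$ (Cayley--Sylvester). The cokernel is an equivariant line bundle on $\PP^d$, hence $\OO(k)$ for some $k$, and I would identify $k=m$ through the surjection $\Psi_{m,d}$: this map corresponds to the unique (up to scalar) copy of $V_{(m-1)d}$ inside $H^0(\OO(m-1))=S^{m-1}(V_d^*)$, and one checks $\Ker\Psi_{m,d}([f])=\im\Phi_{m,d}([f])=\langle f^{m-1}\rangle^{\perp}$ at each point, yielding both $\coker\Phi_{m,d}\cong\OO(m)$ and exactness of \eqref{ex seq sl2}. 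The sequence then gives $c_1(\W_{m,d})=-(m-1)(d-1)$ and $\mu(\W_{m,d})=-(m-1)$.

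The main obstacle is (iii). I would apply Hoppe's criterion: it suffices to prove
\[
H^0\big((\wedge^q\W_{m,d})(t)\big)=0\qquad\text{for all }1\le q\le d-2\ \text{and}\ t\le q(m-1),
\]
the binding case being $t=q(m-1)$. As everything is $\SL_2(\C)$-equivariant, each such $H^0$ is a representation, and the strategy is to compute it from \eqref{ex seq sl2}: splitting the resolution into two short exact sequences and passing to the induced filtrations on $\wedge^q\W_{m,d}$ reduces the vanishing to cohomology of twists $\OO_{\PP^d}(j)$ tensored with fixed $\SL_2(\C)$-modules, governed by Bott's formula and the Littlewood--Richardson rule. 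I expect the bookkeeping of the exterior powers and the identification of the relevant weights to be the technical heart of the argument. A possibly cleaner alternative is to use that a maximal destabilizing subsheaf of the equivariant bundle $\W_{m,d}$ is unique, hence $\SL_2(\C)$-invariant, and to exclude invariant destabilizers by restricting to the Veronese curve $v_d(\PP^1)\subset\PP^d$, where the pullback $v_d^*\W_{m,d}$ is a genuine homogeneous bundle on $\PP^1=\SL_2(\C)/B$ and therefore splits into line bundles that can be computed explicitly.
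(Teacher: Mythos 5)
Your steps (i) and (ii) are correct, and they take a genuinely different route from the paper's. You compute the fibre of $\Phi_{m,d}$ at \emph{every} point $[f]$ as (the transpose of) the transvectant $T_f$ and identify $\Ker T_f=\langle f^{m-1}\rangle$ by the vanishing-Jacobian argument; this gives constant corank $1$ and exactness pointwise, everywhere at once. The paper instead writes the matrix of $\Phi_{m,d}$ explicitly via a recurrence on its coefficients, evaluates it at a single point of the closed orbit (the rational normal curve), and obtains constant rank by combining equivariance and semicontinuity with the upper bound coming from $\Psi_{m,d}\circ\Phi_{m,d}=0$ and the surjectivity of $\Psi_{m,d}$. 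Your pointwise argument is cleaner and self-contained for this part, and your use of Cayley--Sylvester to get $\Psi_{m,d}\circ\Phi_{m,d}=0$ is equivalent to the paper's highest-weight computation.

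The genuine gap is in (iii): neither of your two routes, as stated, proves slope-stability, which is the main assertion. The Hoppe route is only a plan, and at the binding twist $t=q(m-1)$ it is in real trouble: $\wedge^q\W_{m,d}(t)$ has slope zero and its pullback along $v_{1,d}$ is $\wedge^q V_{d-2}\otimes\OO_{\PP^1}$, which has many sections, so the required vanishing $\HH^0(\wedge^q\W_{m,d}(q(m-1)))=0$ cannot be obtained by restriction to the curve and is not even clearly true (Hoppe's criterion is merely sufficient in rank $\ge 3$). Your fallback route fails at exactly the delicate point: restriction to the Veronese curve rules out subsheaves of \emph{strictly} larger slope (this yields semistability, as in the paper, and even that requires first computing the splitting type of the pullback), but it cannot rule out subsheaves of \emph{equal} slope, because $v_{1,d}^*\W_{m,d}\simeq V_{d-2}\otimes\OO_{\PP^1}((1-m)d)$ is strictly semistable and has plenty of equal-slope subsheaves, e.g.\ $\OO_{\PP^1}((1-m)d)$. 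Moreover, once semistability is known, the unique maximal destabilizing subsheaf is $\W_{m,d}$ itself, so your uniqueness/invariance argument produces nothing to exclude. The paper needs three further ingredients here, absent from your sketch: (a) the actual identification $v_{1,d}^*\W_{m,d}\simeq V_{d-2}\otimes\OO_{\PP^1}((1-m)d)$, proved by showing $\Ker\psi_{m,d}\simeq V_{(m-1)d-1}\otimes\OO_{\PP^1}(d-1)$ and constructing an injective equivariant map $V_{d-2}\otimes\OO_{\PP^1}((1-m)d)\to\V_{m,d}$; (b) an equivariance argument on the Jordan--H\"older graded object, giving $\gr(\W_{m,d})\simeq\bigoplus_i R_i\otimes\FF_i$ with $R_i$ representations and $\FF_i$ stable, which after pullback forces either stability or $\W_{m,d}\simeq V_{d-2}\otimes\OO_{\PP^d}(1-m)$; and (c) the exclusion of the latter case by checking $c_2(\W_{m,d}(m-1))\neq 0$. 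Without (a)--(c), or a genuine completion of the Hoppe computation, stability remains unproved.
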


\medskip

The bundles we construct here, as well as the
presentation matrices defining them, are actually
defined over $\PP^n_{\QQ}$ and therefore over $\Z$. However one
cannot reduce modulo an arbitrary prime number $p$ to obtain bundles
defined in characteristic $p$ (unless $p$ is
high enough), as the rank of the defining matrices may drop modulo $p$.

The presentation matrices can be defined in an algorithmic fashion
simply by using the action of the Lie algebra of $\SL_{n+1}(\C)$. We
provide an ancillary file containing a {\tt Macaulay2} package to do this on a computer.

\bigskip

The paper is structured as follows. In \S \ref{equivariant complex} we
prove that the maps appearing in the sequence defining $\W_{m,d}$ 
compose to zero, and that the resulting equivariant complex is exact
at the sides. In \S \ref{SL2} we prove our main result for $n=1$,
i.e.~for the case of $\SL_2(\C)$-bundles. This is intended to guide
the reader through the argument, which is a bit different (and much
simpler) in this case. Also, this allows to quickly exhibit our $\SL_2(\C)$-bundles, which are 
the most interesting ones as far as the search of low-rank bundles on
$\PP^d$ is concerned. In \S \ref{G} we treat the case of higher $n$, where the
treatment of homogeneous bundles via representations of quivers comes into play.
In \S \ref{varie} we show that our bundles are always new except for
the case of the classical Tango bundle.
\bigskip

We would like to thank L. Gruson and G. Ottaviani for fruitful comments.

\section{The equivariant complex} \label{equivariant complex}

Recall the fundamental sequence appearing in Theorem \ref{risultato generale}:
\begin{equation}
0 \to  \W_{m,d} \to  V_{(md-2)\lambda_1+\lambda_2} \ot \OO_{\PP V_d} \xrightarrow{\Phi_{m,d}} V_{(m-1)d} \ot \OO_{\PP V_d}(1) \xrightarrow{\Psi_{m,d}} \OO_{\PP V_d}(m) \to 0, \tag{\ref{ex seq}}
\end{equation}
and its analogue for the case $n=1$, from Theorem \ref{risultato SL2}:
\begin{equation}
0 \rightarrow \W_{m,d} \rightarrow  V_{md-2} \ot \OO_{\PP^d} \xrightarrow{\Phi_{m,d}} V_{(m-1)d} \ot \OO_{\PP^d}(1) \xrightarrow{\Psi_{m,d}} \OO_{\PP^d}(m) \rightarrow 0. \tag{\ref{ex seq sl2}}
\end{equation}
In this section we show that these maps form  equivariant complexes of vector bundles.

\medskip

\begin{lem}\label{composizione nulla}
For $n \ge 2$, the space of $G$-invariant morphisms $\Hom(V_{(md-2)\lambda_1+\lambda_2}, S^mV_d)^G$ is zero. In particular, the same is true for the composition of the two maps 
\[V_{(md-2)\lambda_1+\lambda_2} \ot \OO_{\PP V_d} \xrightarrow{\Phi_{m,d}} V_{(m-1)d} \ot \OO_{\PP V_d}(1) \xrightarrow{\Psi_{m,d}}  \OO_{\PP V_d}(m).
\]  
The same result holds for $n=1$, the space $\Hom(V_{md-2}, S^mV_d)^G$, and the composition 
\[V_{md-2} \ot \OO_{\PP^d} \xrightarrow{\Phi_{m,d}} V_{(m-1)d} \ot \OO_{\PP^d}(1) \xrightarrow{\Psi_{m,d}}  
\OO_{\PP^d}(m).
\]  
\end{lem}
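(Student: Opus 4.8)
The plan is to reduce the statement to a plethysm multiplicity and then to a short weight-space count. By Schur's lemma, for an irreducible $G$-module $V_\mu$ the space $\Hom(V_\mu, S^m V_d)^G$ has dimension equal to the multiplicity of $V_\mu$ in $S^m V_d = S^m(S^d V)$, which I regard as a polynomial $\GL(V)$-representation, homogeneous of degree $md$. Under the standard dictionary between dominant weights of $\SL_{n+1}(\C)$ and partitions, the weight $(md-2)\lambda_1 + \lambda_2$ corresponds to the partition $\mu = (md-1,\,1)$, and the claim becomes: the Schur module $\mathbb{S}_\mu(V)$ does not occur in $S^m(S^d V)$. Because $\mu$ has only two parts and $|\mu| = md$, passing from $\GL$ to $\SL$ loses nothing: any other partition restricting to the same $\SL$-irreducible would have size $md + k(n+1)$ with $k \neq 0$, hence cannot appear in a representation that is homogeneous of degree $md$. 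So it suffices to compute the $\GL$-multiplicity.

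To compute it I would invoke the Kostka triangle $\dim (S^m S^d V)_\nu = \sum_\lambda c_\lambda K_{\lambda\nu}$, where $c_\lambda$ is the multiplicity of $\mathbb{S}_\lambda(V)$ and $K_{\lambda\nu}$ the weight multiplicity of $\nu$ in $\mathbb{S}_\lambda(V)$. The only partitions $\lambda$ of $md$ with $\lambda \trianglerighteq \mu$ are $(md)$ and $\mu$ itself, and the relevant Kostka numbers are all equal to $1$; evaluating the identity at $\nu = (md)$ and at $\nu = \mu$ yields $c_\mu = \dim(S^m S^d V)_{\mu} - \dim(S^m S^d V)_{(md)}$. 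I then compute these two weight spaces directly: $S^m(S^d V)$ has a weight basis indexed by multisets of $m$ monomials of degree $d$ in $x_0, \ldots, x_n$, the weight being the total exponent vector. The weight $(md, 0, \ldots, 0)$ is realized only by $m$ copies of $x_0^d$, and the weight $(md-1, 1, 0, \ldots, 0)$ only by one copy of $x_0^{d-1}x_1$ together with $m-1$ copies of $x_0^d$. Both weight spaces are one-dimensional, giving $c_\mu = 1 - 1 = 0$, as desired. For $n=1$ the same count applies verbatim to the single relevant weight; alternatively one may invoke the Cayley--Sylvester formula, according to which the multiplicity of $V_{md-2}$ in $S^m(S^d\C^2)$ equals the number of partitions of $1$ fitting in an $m \times d$ box minus the number of partitions of $0$, namely $1 - 1 = 0$.

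Finally, the ``in particular'' follows formally: the composite $\Psi_{m,d} \circ \Phi_{m,d}$ is a $G$-equivariant $\OO_{\PP V_d}$-linear morphism $V_\mu \ot \OO_{\PP V_d} \to \OO_{\PP V_d}(m)$, hence a $G$-invariant element of $\Hom(V_\mu \ot \OO_{\PP V_d}, \OO_{\PP V_d}(m))^G = \big(V_\mu^\ast \ot H^0(\OO_{\PP V_d}(m))\big)^G = \Hom(V_\mu, S^m V_d)^G$, where $H^0(\OO_{\PP V_d}(m)) \cong S^m V_d$; this space has just been shown to vanish, so the composite is zero. I expect the only genuinely delicate point to be the bookkeeping in the first two paragraphs — checking that exactly two partitions dominate $\mu$, that the three Kostka numbers are indeed $1$, and that no multiplicity is lost in restricting from $\GL$ to $\SL$ — whereas the two weight counts become immediate once the monomial basis is in place.
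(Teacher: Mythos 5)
Your proof is correct, and it reaches the same reduction as the paper but closes it by a different mechanism. Like the paper, you invoke Schur's lemma together with the identification $\Hom(V_\mu \ot \OO_{\PP V_d}, \OO_{\PP V_d}(m))^G \simeq \Hom(V_\mu, S^m V_d)^G$, and both arguments ultimately rest on the same combinatorial fact: the weight space of $S^m(S^dV)$ at $(md-1,1,0,\ldots,0)$ is one-dimensional, spanned by the single multiset $\{x_0^{d-1}x_1, x_0^d,\ldots,x_0^d\}$ (the monomial $y_1^{m-1}y_2$ in the paper's notation). Where you diverge is the last step. The paper stays inside $\SL_{n+1}(\C)$ and argues with the Lie algebra: since $y_1^{m-1}y_2$ is the unique vector of weight $(md-1)L_1+L_2$, it is the only candidate highest weight vector, and the explicit computation $E_{12}(y_1^{m-1}y_2)=y_1^m\neq 0$ shows it is not one, so $V_{(md-2)\lambda_1+\lambda_2}$ cannot occur. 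You instead pass to $\GL(V)$ and use the triangularity of the Kostka matrix with respect to dominance order: only $(md)$ and $(md-1,1)$ dominate $(md-1,1)$, the three relevant Kostka numbers equal $1$, so the multiplicity is a difference of two weight-space dimensions, namely $1-1=0$. Your route avoids any operator computation but requires the $\GL$-to-$\SL$ bookkeeping (which you handle correctly via homogeneity of degree $md$) and the Kostka formalism; the paper's route is more elementary and self-contained, needing only the commutation relations, and has the side benefit of setting up the explicit Lie-algebra machinery that the paper reuses later (e.g.\ to write down the matrices $\Phi_{m,d}$). Your treatment of the case $n=1$, including the Cayley--Sylvester alternative, is also accurate, and the formal deduction of the vanishing of $\Psi_{m,d}\circ\Phi_{m,d}$ matches the paper's.
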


\begin{proof} In virtue of Schur's lemma, it is enough to show that the irreducible representation $V_{(md-2)\lambda_1+\lambda_2}$ does not appear in the decomposition of $S^mV_d$.

We follow the standard notation from \cite{FH} and denote by $H_i$ the
diagonal matrix $E_{i,i}$, and by $L_i$ the linear operator such that
$L_i(H_j)=\delta_{ij}$, so that the fundamental weights of $G$ are
$\lambda_i=L_1 + \ldots + L_i$, for $i=1,\ldots,n$. The Lie algebra
$\fg$ of $G$ is generated by $E_{i,j}$, $E_{j,i}$, and $H_i-H_j$ with the standard commutation relations $[H_i-H_j,E_{i,j}]=2E_{i,j}$, $[H_i-H_j,E_{j,i}]=-2E_{j,i}$, $[E_{i,j},E_{j,i}]=H_i-H_j$, again for $1\le i <j \le n$.

Now suppose that $V$ is generated by $\mathsf{x_1},\ldots,\mathsf{x_{n+1}}$; then the space of symmetric $d$-forms $V_d$ has a basis defined by $x^{(k_1,\ldots,k_{n+1})}=\mathsf{x_1}^{k_1}\mathsf{x_2}^{k_2}\cdots \mathsf{x_{n+1}}^{k_{n+1}}$ and indexed by all partitions $(k_1,\ldots,k_{n+1})$ of $d$. We rename the basis elements of $V_d$ as $y_1,\ldots,y_\alpha$, with $\alpha=\binom{n+d}{d}$, according to the lexicographic ordering on the monomials of degree $d$ in the variables $\mathsf{x_1},\ldots,\mathsf{x_{n+1}}$.
We repeat the process for the symmetric power $V_{(m-1)d}$, endowing it with the basis $z_1,\ldots,z_\beta$, with $\beta=\binom{n+(m-1)d}{(m-1)d}$.

The natural action of $G$ extends linearly to the product $V_d \otimes V_{(m-1)d}$, which in turn splits as in \eqref{tensor prod dec n}. The highest weight vector of the irreducible representation $V_{(md-2)\lambda_1+\lambda_2}$ of highest weight $(md-2)\lambda_1+\lambda_2= (md-1)L_1+L_2$ is $y_1z_2-y_2z_1$.

On the other hand, $G$ also acts on the symmetric power $S^mV_d$; with our notation, $S^mV_d$ has a basis defined by monomials of type $y_1^{h_1}y_2^{h_2}\cdots y_\alpha^{h_\alpha}$, indexed by all partitions $(h_1,\ldots,h_\alpha)$ of $m$. The only such monomial with highest weight $(md-1)L_1+L_2$ is $y_1^{m-1}y_2$, and this makes it (up to a constant) the only candidate for being the highest weight vector of $V_{(md-2)\lambda_1+\lambda_2}$. Notice however that:
\[
E_{12}(y_1^{m-1}y_2)=E_{12}(\mathsf{x_1}^{(m-1)d}\mathsf{x_1}^{d-1}\mathsf{x_2})=\mathsf{x_1}^{md}=y_1^m \neq 0,
\]
hence $(md-1)L_1+L_2$ cannot be a highest weight in $S^mV_d$, and $V_{(md-2)\lambda_1+\lambda_2}$ cannot appear as irreducible summand in its decomposition.

\smallskip

For the second part of the statement, notice that
\[
\Psi_{m,d} \circ \Phi_{m,d} \in \Hom(V_{(md-2)\lambda_1+\lambda_2} \ot \OO_{\PP V_d},  \OO_{\PP V_d}(m))^G \simeq \Hom(V_{(md-2)\lambda_1+\lambda_2}, S^mV_d)^G.
\]
Finally, in the case of $\SL_2(\C)$, recall that the Lie algebra $\fg$ of $G$ is
generated by $X$, $Y$, and $H$, with $[H,X]=2X$, $[H,Y]=-2Y$, and
$[X,Y]=H$; the same proof as above applies with minor modifications
which we omit.
\end{proof}

\medskip

In the study of the sequences \eqref{ex seq} and \eqref{ex seq sl2},
it is useful to restrict them to the closed orbit of the $G$-action on $\PP V_d$, namely the Veronese variety. For this, let
$v_{n,d}$ be the Veronese map $\PP V \to \PP V_d$ given by the
complete linear system $|\OO_{\PP V}(d)|$, and put 
\[
X_d:=\im(v_{n,d} : \PP V \to \PP V_d).
\]

 The idea behind this is that  $\PP V$ is a
 $G$-homogeneous space and the pull-back of $\W_{n,d}$ to $\PP V$ 
 is a $G$-homogeneous bundle.
 This remark enables to prove several results concerning this pull-back.
 The next observation is that many of our key statements extend to the
 whole ambient space $\PP V_d$ by continuity and $G$-equivariance,
 because the $G$-orbit of any point of $\PP V_d$ contains $X_d$ in its
 Zariski closure.

\bigskip

Restricting \eqref{ex seq} and \eqref{ex seq sl2} to $X_d$ and pulling
back via $\PP V \simeq \PP^n \to X_d$ we get the sequences: 
\begin{equation}\label{restr ex seq}
0 \to  \V_{m,d} \to  V_{(md-2)\lambda_1+\lambda_2} \ot \OO_{\PP^n} \xrightarrow{\phi_{m,d}} V_{(m-1)d} \ot \OO_{\PP^n}(d) 
\xrightarrow{\psi_{m,d}} \OO_{\PP^n}(md) \to 0,
\end{equation}
and 
\begin{equation}\label{restr ex seq sl2}
0 \to  \V_{m,d} \to  V_{md-2} \ot \OO_{\PP^1} \xrightarrow{\phi_{m,d}} V_{(m-1)d} \ot \OO_{\PP^1}(d) 
\xrightarrow{\psi_{m,d}} \OO_{\PP^1}(md) \to 0,
\end{equation}
respectively, where in both cases $\V_{m,d} \simeq \W_{n,d}|_{X_d}$ is a vector bundle on $\PP^n$ which is homogeneous under $G$, the map $\phi_{m,d}$ is the pull-back to $\PP^n$ of $\Phi_{m,d}$, and similarly $\psi_{m,d}$ is the pull-back of $\Psi_{m,d}$.

\medskip

Studying these restricted sequences we obtain the following:

\begin{lem}\label{psi surj} 
The morphism $\Psi_{m,d}: V_{(m-1)d} \ot \OO_{\PP V_d}(1) \longrightarrow  \OO_{\PP V_d}(m)$ is surjective.
\end{lem}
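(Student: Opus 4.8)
**The plan is to prove surjectivity of $\Psi_{m,d}$ by reducing to the restricted sequence on the Veronese and exploiting $G$-equivariance.**

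The key idea is the continuity/equivariance principle stated in the excerpt: since the $G$-orbit of any point of $\PP V_d$ contains the Veronese variety $X_d$ in its Zariski closure, and since surjectivity of a morphism of sheaves is an open condition (the locus where the fiber map is surjective is open), it suffices to check that $\Psi_{m,d}$ is surjective at the points of the closed orbit $X_d$. Indeed, if $\Psi_{m,d}$ were to fail surjectivity at some point $p \in \PP V_d$, the non-surjectivity locus would be a $G$-invariant closed subset containing $p$, hence containing the closure of the $G$-orbit of $p$, hence containing $X_d$. So the whole statement collapses to verifying surjectivity of the pulled-back map $\psi_{m,d}$ in the restricted sequence \eqref{restr ex seq} (respectively \eqref{restr ex seq sl2}) at every point of $\PP^n$.

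First I would identify the map $\psi_{m,d}: V_{(m-1)d} \ot \OO_{\PP^n}(d) \to \OO_{\PP^n}(md)$ concretely. Pulling back via the Veronese $v_{n,d}$, the line bundle $\OO_{\PP V_d}(m)$ becomes $\OO_{\PP^n}(md)$, and the fiber of $\psi_{m,d}$ over a point $[f] \in \PP^n$ (with $f \in V$ a linear form) is, up to scalar, the evaluation/multiplication map sending a degree-$(m-1)d$ form $g$ to $f^d \cdot g \in S^{md}V = V_{md}$. Because $\psi_{m,d}$ arises as the pullback of the equivariant quotient $\Psi_{m,d}$ dual to the inclusion $V_{md} \hookrightarrow V_d \ot V_{(m-1)d}$ in \eqref{tensor prod dec n}, this fiberwise description is forced by the representation theory. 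To show surjectivity at $[f]$, I would observe that it suffices to find, for each generator of the target, a preimage: fixing coordinates so that $f = \mathsf{x_1}$, the image of $\psi_{m,d}$ over $[f]$ contains $\mathsf{x_1}^d \cdot g$ for all $g \in V_{(m-1)d}$, and as $g$ ranges over the basis monomials of degree $(m-1)d$, the products $\mathsf{x_1}^d \cdot g$ span a large subspace of $V_{md}$. At this single point the image is $\mathsf{x_1}^d \cdot V_{(m-1)d}$, which is a proper subspace of $V_{md}$ — but surjectivity onto the one-dimensional fiber $\OO_{\PP^n}(md)|_{[f]}$ only requires that this image be nonzero, which is immediate since $\mathsf{x_1}^{md} = \mathsf{x_1}^d \cdot \mathsf{x_1}^{(m-1)d}$ lies in it.

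The main obstacle, and the point requiring the most care, is pinning down the explicit form of the pulled-back cofiber map and confirming that it is genuinely nonzero at every point of the homogeneous space $\PP^n$ — not merely at the single highest-weight point used in Lemma \ref{composizione nulla}. Here equivariance does the work: the image sheaf of $\psi_{m,d}$ is a $G$-subsheaf of $\OO_{\PP^n}(md)$, and since $\OO_{\PP^n}(md)$ is a $G$-homogeneous line bundle on the homogeneous space $\PP^n \simeq \PP V$, any nonzero $G$-equivariant subsheaf must be all of it; thus nonvanishing at one point propagates to surjectivity everywhere by transitivity of the $G$-action. Once $\psi_{m,d}$ is surjective on all of $\PP^n = X_d$, the equivariance/closure argument of the first paragraph upgrades this to surjectivity of $\Psi_{m,d}$ on all of $\PP V_d$, completing the proof.
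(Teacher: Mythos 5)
Your proposal is essentially correct and its skeleton is the same as the paper's: first reduce to the closed orbit (your closed, $G$-invariant non-surjectivity locus is exactly the paper's remark that the rank of the map can only increase off the closed orbit, both resting on the fact that every $G$-orbit closure in $\PP V_d$ contains $X_d$), then check that the restricted map $\psi_{m,d}$ is surjective on $\PP^n$. The difference lies in how that second step is carried out. The paper does it with no pointwise computation at all: it recognizes $\psi_{m,d}$ twisted by $\OO_{\PP^n}(-d)$ as the pull-back, under the \emph{degree-$(m-1)d$} Veronese $v_{n,(m-1)d}$, of the Euler surjection $V_{(m-1)d}\ot\OO_{\PP V_{(m-1)d}}\to\OO_{\PP V_{(m-1)d}}(1)$, hence surjective. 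You instead argue fiberwise, identifying the fiber of $\psi_{m,d}$ at $[f]$ by equivariance, exhibiting a witness, and propagating by transitivity of $G$ on $\PP^n$; this is more hands-on and perfectly legitimate, but it is also where your write-up has its one misstep. The fiber map at $[f]$ is not $g\mapsto f^d g$ with values in $V_{md}$: it is that multiplication composed with evaluation of global sections $V_{md}=\HH^0(\OO_{\PP^n}(md))\to\OO_{\PP^n}(md)|_{[f]}$, whose kernel is the hyperplane of sections vanishing at $[f]$. So ``the image in $V_{md}$ is nonzero'' is not a sufficient criterion for surjectivity onto the fiber --- a nonzero subspace of $V_{md}$ (for instance the span of $\mathsf{x_2}^{md}$) can perfectly well die in the fiber at $[\mathsf{x_1}]$. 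Your argument survives because your particular witness works: $\mathsf{x_1}^{md}=\mathsf{x_1}^d\cdot\mathsf{x_1}^{(m-1)d}$ does \emph{not} vanish at $[\mathsf{x_1}]$, and that final (easy) evaluation is the sentence that must be added. With it, your fiberwise route is a complete alternative to the paper's Euler-sequence identification; what the paper's version buys is precisely that it never has to descend to fibers, while yours makes the nonvanishing mechanism explicit.
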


\begin{proof} 
Let us consider a different Veronese embedding than before, namely the
one of degree $(m-1)d$, $v_{n,(m-1)d}$.
On the projective space $ \PP V_{(m-1)d}$ we have the Euler sequence
with the natural surjection 
\[
V_{(m-1)d} \ot \OO_{\PP V_{(m-1)d}} \rightarrow \OO_{\PP V_{(m-1)d}}(1).
\]
This surjection, pulled-back via $v_{n,(m-1)d}$ to $\PP^n$, becomes the
obvious epimorphism:
\[V_{(m-1)d} \ot \OO_{\PP^n} \rightarrow \OO_{\PP^n}((m-1)d).\]
This map is precisely the morphism $\psi_{m,d}$, twisted by $\OO_{\PP
  V}(-d)$, so $\psi_{m,d}$ is surjective.
The surjectivity of $\Psi_{m,d}$ follows, because the rank of the map can only increase with respect to the value on the closed orbit.
\end{proof}

\begin{rem}
The restricted morphism $\psi_{m,d}$ also corresponds to the natural
surjection $\mathcal{P}^k(F) \twoheadrightarrow F$ of the \emph{bundle
  of $k$-jets} (also known as  \emph{principal parts sheaf}) of a
vector bundle $F$ onto $F$ itself. Indeed in \cite{perkinson} it is
shown that the $k$-jets of line bundles on a projective space $\PP V$
have the simple form $\mathcal{P}^k( \OO_{\PP V}(h))= V_k \ot
\OO_{\PP V}(h-k)$. In particular,  
$\mathcal{P}^{(m-1)d}( \OO_{\PP V}(md))= V_{(m-1)d} \ot \OO_{\PP V}(d)$. 
\end{rem}

\smallskip

Lemmas \ref{composizione nulla} and \ref{psi surj} entail that both
sequences \eqref{ex seq} and \eqref{ex seq sl2} are equivariant
complexes, exact at the sides. In the next two sections we will show that
exactness holds in the middle, as well, and this will conclude the proof
of our main results.

\section{The case of binary forms} \label{SL2}

This section is devoted to the special case $G=\SL_2(\C)$. We start 
by proving our main theorem in this case. In \S \ref{varie} we will draw a
few more remarks about our bundles of rank $d-1$ on $\PP^d$, which we
see as the space of binary forms of degree $d$.

\subsection{The equivariant matrix of linear forms}
Recall from the proof of Lemma \ref{composizione nulla} that the Lie
algebra $\fsl_2(\C)$ is generated by $X$, $Y$, and $H$, with $[H,X]=2X$,
$[H,Y]=-2Y$, and $[X,Y]=H$; moreover, we consider $V$ as being
generated by $\mathsf{x_1}$ and $\mathsf{x_2}$, so that  $V_{d}$ has a
basis defined by $y_k=\mathsf{x_1}^{d-k+1}\mathsf{x_2}^{k-1}$, for
$k=1,\ldots,d+1$. Similarly  $V_{(m-1)d}$ has a basis defined by
$z_h=\mathsf{x_1}^{(m-1)d-h+1}\mathsf{x_2}^{h-1}$, for
$h=1,\ldots,(m-1)d+1$. 

An element of weight $md-2j$ in $V_d \otimes V_{(m-1)d}$ is a linear combination
\[
v^{(j)} = \sum_{i=1}^{j+1} c_{i,j}\, y_{j-i+2} z_{i}
\]
and acting with $Y$ on $v^{(j-1)}$, one obtains
\[
\begin{split}
Y\big(v^{(j-1)}\big) = {}& (d-j+1) c_{1,j-1}\, y_{j+1} z_1 + {} \\
& \sum_{i=2}^{j}  \Big[ (d-j+i) c_{i,j-1} + \big((m-1)d-i+2\big) c_{i-1,j-1} \Big] y_{j-i+2} z_i + {}\\
& \big((m-1)d-j+1\big) c_{j,j-1} \, y_1 z_{j+1}.
\end{split}
\]
As the highest weight vector of $V_{md-2}$ is $y_1z_2-y_2z_1$, a basis of the representation $V_{md-2}$ in $V_d \otimes V_{(m-1)d}$ is given by the set $\{v^{(1)},\ldots,$ $v^{(md-1)}\}$ with coefficients $c_{i,j}$ defined by $c_{1,1} = -1$, $c_{2,1} = 1$ and for $2 \leqslant j \leqslant md-1$
\begin{equation}\label{eq:ricorrenza}
c_{i,j} = \begin{cases}
(d-j+1)c_{1,j-1}& i = 1,\\
(d-j+i) c_{i,j-1} + \big((m-1)d-i+2\big) c_{i-1,j-1}& 2 \leqslant i \leqslant j,\\
\big((m-1)d-j+1\big) c_{j,j-1} & i = j+1.
\end{cases}
\end{equation}
Resolving the recurrence relation, one finds
\[
c_{i,j} = \begin{cases}
- \smashoperator[r]{\prod\limits_{k=2}^j} (d-k+1),& i=1,\\
- \left[ \smashoperator[r]{\prod\limits_{k=1}^{i-2}}\big( (m-1)d-k\big) \right]\left[ \smashoperator[r]{\prod\limits_{k=1}^{j-i+1}} (d-k+1)\right] \left[\binom{j-1}{i-1}m - \binom{j}{i-1}\right], & i \geqslant 2.
\end{cases}
\]
These coefficients can be simplified when describing the matrix $\Phi_{m,d}$. In fact, given $j \geqslant d+2$, the coefficient $c_{i,j}$ vanishes if $i \leqslant j-d$ or $i \geqslant j+2$ and for $j-d+1 \leqslant i \leqslant j+1$ all coefficients $c_{i,j}$ are multiple of the product $\prod_{k=1}^{j-d-1} \big( (m-1)d-k\big)$ that can be removed. Moreover, a basis of the representation $V_{md-2}$ in $V_d \otimes V_{(m-1)d}$ can also be obtained starting from the lowest weight vector $y_{d}z_{(m-1)d+1} - y_{d+1}z_{(m-1)d} $ and acting with $X$. In this way, one obtains a recurrence relation analogous to \eqref{eq:ricorrenza} that reveals a symmetry of the coefficients appearing in the matrix $\Phi_{m,d}$. Taking into account these remarks, the matrix representing $\Phi_{m,d}$ in the case of $G=\SL_2(\C)$ is
\[
\big(\Phi_{m,d}\big)_{i,j} = \begin{cases}
\widetilde{c}_{i,j}\, y_{j-i+2}, & 1 \leqslant j-i+2 \leqslant d+1, \\
0, & \text{otherwise},
\end{cases}
\]
where
\[
\widetilde{c}_{i,j} = \begin{cases}
 - \smashoperator[r]{\prod\limits_{k=2}^j} (d-k+1) & \begin{array}{c} j \leqslant \left\lceil \frac{md-1}{2} \right\rceil\\\text{and }  i = 1 \end{array},\\
 - \left[ \smashoperator[r]{\prod_{k=\max(1,j-d)}^{i-2}}\big( (m-1)d-k\big) \right]\left[ \smashoperator[r]{\prod\limits_{k=1}^{j-i+1}} (d-k+1)\right] \left[\binom{j-1}{i-1}m - \binom{j}{i-1}\right] & \begin{array}{c} j \leqslant \left\lceil \frac{md-1}{2} \right\rceil\\ \text{and } i \geqslant 2 \end{array},\\
-\widetilde{c}_{(m-1)d-i+2, md - j}  &  \begin{array}{c}  j > \left\lceil \frac{md-1}{2} \right\rceil \end{array}.
\end{cases}
\]

\begin{figure}
\begin{multicols}{2}

\subfloat[][The matrix $\Phi_{4,4}$.]
{
\begin{sideways}
\begin{minipage}{0.85\textheight}
\input{example.tex}
\end{minipage}
\end{sideways}
}

\columnbreak

\subfloat[][The matrix $\Phi_{3,5}$.]
{
\begin{sideways}
\begin{minipage}{0.85\textheight}
\input{example2.tex}
\end{minipage}
\end{sideways}
}
\end{multicols}
\caption{Two examples of the matrix $\Phi_{m,d}$ in the case of $G = \SL_2(\C)$.}
\end{figure}

\subsection{Proof of Theorem \ref{risultato SL2}}

We proceed in several steps, articulated along the next subsections,
which we briefly outline here. First, we show exactness of the
equivariant complex. Then, we pull-back to $\PP^1$ via the Veronese
embedding of $\PP^1$ in $\PP^d$ determined by the identification
$\PP^d = \PP V_d$. We study the pull-back bundle $\V_{m,d}$ of $\W_{m,d}$ and
prove that it is isomorphic to $V_{d-2}\otimes \OO_{\PP^1}((1-m)d)$.
We finally deduce the stability of $\W_{m,d}$.

\subsubsection{Exactness of the equivariant complex}

As mentioned above, Lemmas \ref{composizione nulla} and \ref{psi surj} entail 
that sequence \eqref{ex seq sl2} is a complex, exact at the sides; 
in particular, $\Phi_{m,d}$ is a $((m-1)d+1)\times(md-1)$ matrix of linear forms in $d+1$ variables, and whose rank is at most $(m-1)d$. 

On the other hand, $\rk(\Phi_{m,d})$ is bounded below by the rank of
$\Phi_{m,d}\vert_{y}$, where $y$ is any point of the
closed orbit, the rational normal curve $X_d$ of degree $d$ in
$\PP^d$. At the point $y=[1:0:\ldots:0]  \in X_d$, the entries of the matrix $\Phi_{m,d}\vert_{y}$ are all zero except that on the subdiagonal, where the values are
\[
\big(\Phi_{m,d}\vert_{y} \big)_{j+1,j} = \prod_{k=1}^{j-1} \big((m-1)d-k\big)\neq 0,\qquad \forall\ j=1,\ldots,(m-1)d.	
\]
Hence the rank is $(m-1)d$, which is what we wanted. 

From \eqref{ex seq sl2} we compute that the vector bundle $\W_{m,d}$
has rank equal to $md-1-(m-1)d=d-1$, as required.

\subsubsection{The pulled-back image bundle}

Recall the  morphism of bundles $\psi_{m,d}$, defined on
$\PP^1$ as pull-back of $\Psi_{m,d}$ via the Veronese map $v_{1,d} :
\PP^1 \to \PP^d$. We first study the image of $\psi_{m,d}$, so
put $\LL_{m,d} = \im(\psi_{m,d})$. This is a vector bundle defined by
the exact sequence: 
\[
0 \to \LL_{m,d} \to V_{(m-1)d} \ot \OO_{\PP^1}(d) \xrightarrow{\psi_{m,d}} \OO_{\PP^1}(md) \to 0.  
\]
We first show that:
\begin{equation}
  \label{LP1}
\LL_{m,d} \simeq V_{(m-1)d-1} \otimes \OO_{\PP^1}(d-1).  
\end{equation}
To see this, note that for each $t \ge 0$ the map $\psi_{m,d}$ induces an
equivariant surjection:
\[
V_{(m-1)d} \otimes \HH^0(\OO_{\PP^1}(t)) \to \HH^0(\OO_{\PP^1}((m-1)d+t)),
\]
all the surjections for $t \ge 1$ being induced by the case $t=0$, which in turn is obvious.
Therefore for $t=0$ we have $\HH^0(\LL_{m,d}(-d))=0$ while, for $t=1$, using \eqref{tensor prod dec 2}, we get
\[
\HH^0(\LL_{m,d}(1-d)) \simeq V_{(m-1)d-1},
\]
which easily implies \eqref{LP1}.
Next, we rewrite the exact sequence defining $\V_{m,d}$ as:
\[
0 \to \V_{m,d} \to V_{md-2}\otimes \OO_{\PP^1} \to V_{(m-1)d-1}
\otimes \OO_{\PP^1}(d-1) \to 0.
\]

\subsubsection{The pulled-back kernel bundle}
Next we want to prove:
\begin{equation}
  \label{V}
\V_{m,d} \simeq V_{d-2} \otimes \OO_{\PP^1}((1-m)d).  
\end{equation}

To check this, note that \eqref{tensor prod dec 2} gives, via
the same argument that we mentioned to define $\Psi_{m,d}$, an
equivariant map:
\[
\vartheta_{m,d} : V_{d-2} \otimes \OO_{\PP^1}((1-m)d) \to V_{md-2} \otimes \OO_{\PP^1}.
\]
The highest weight vector defining the irreducible representation
$V_{d-2}$ as a direct summand of $V_{(m-1)d}\otimes V_{md-2}$ is the following:
\[
\sum_{i=0}^{(m-1)d} (-1)^i \binom{(m-1)d}{i} \mathsf{x_1}^{i} \mathsf{x_2}^{(m-1)d-i} \otimes \mathsf{x_1}^{md-2-i} \mathsf{x_2}^{i}.
\]
Acting $k$ times with $Y$ gives
\[
\sum_{i=0}^{(m-1)d} (-1)^i \binom{(m-1)d}{i} \mathsf{x_1}^{i} \mathsf{x_2}^{(m-1)d-i} \otimes \mathsf{x_1}^{md-2-i-k} \mathsf{x_2}^{i+k}.
\] 
Hence, the map $\vartheta_{m,d}$ is described by the following matrix:
\begin{center}
\begin{tikzpicture}[scale=0.5]
\draw [thick] (0.,0.3) -- (-0.3,0.3) -- (-0.3,-16.1) -- (0.0,-16.1);
\draw [thick] (8.8,0.3) -- (9.1,0.3) -- (9.1,-16.1) -- (8.8,-16.1);

\node at (6,-2) [] {\Large $0$};
\node at (2.8,-13.8) [] {\Large $0$};

\draw (0.0,0) -- (1,0) -- (1,-8) -- (0.0,-8) -- cycle;
\node at (0.5,-4) [] {$C$};
\draw (0.0+1.3,0-1.3) -- (1+1.3,0-1.3) -- (1+1.3,-8-1.3) -- (0.0+1.3,-8-1.3) -- cycle;
\node at (0.5+1.3,-4-1.3) [] {$C$};

\draw (0.0+2*1.3,0-2*1.3) -- (1+2*1.3,0-2*1.3) -- (1+2*1.3,-8-2*1.3) -- (0.0+2*1.3,-8-2*1.3) -- cycle;
\node at (0.5+2*1.3,-4-2*1.3) [] {$C$};

\node at (0.5+3.5*1.3,-4-3.5*1.3) [] {$\ddots$};

\draw (0.0+5*1.3,0-5*1.3) -- (1+5*1.3,0-5*1.3) -- (1+5*1.3,-8-5*1.3) -- (0.0+5*1.3,-8-5*1.3) -- cycle;
\node at (0.5+5*1.3,-4-5*1.3) [] {$C$};

\draw (0.0+6*1.3,0-6*1.3) -- (1+6*1.3,0-6*1.3) -- (1+6*1.3,-8-6*1.3) -- (0.0+6*1.3,-8-6*1.3) -- cycle;
\node at (0.5+6*1.3,-4-6*1.3) [] {$C$};

\node at (18,-7.5) [] {with\ $C = {\small \left[
\begin{array}{c}	
\mathsf{x_2}^{(m-1)d}  \\
-\binom{(m-1)d}{1} \mathsf{x_1} \mathsf{x_2}^{(m-1)d-1} \\
\\
\vdots \\
\\
(-1)^i \binom{(m-1)d}{i} \mathsf{x_1}^i \mathsf{x_2}^{(m-1)d-i} \\
\\
\vdots\\
\\
(-1)^{(m-1)d-1}\binom{(m-1)d}{(m-1)d-1} \mathsf{x_1}^{(m-1)d-1}  \mathsf{x_2}\\
(-1)^{(m-1)d} \mathsf{x_1}^{(m-1)d} \\
\end{array}
\right]}$};
\end{tikzpicture}
\end{center}

It is clear that this matrix is injective at one (and hence at any) point of $\PP^1$.

\bigskip

Now, again \eqref{tensor prod dec 2} says that $V_{(m-1)d-1}\otimes
V_{md-1} \simeq V_{2md-d-2}\oplus \cdots \oplus V_d$, so the
representation $V_{d-2}$ does not occur in this direct sum. Then,
by the same argument as in end of the proof of Lemma \ref{composizione nulla}, 
the composition of maps
\[V_{d-2} \otimes \OO_{\PP^1}((1-m)d) \to V_{md-2}\otimes \OO_{\PP^1} \to V_{(m-1)d-1} \otimes \OO_{\PP^1}(d-1)\]
is zero.
Therefore, we get an injective map $V_{d-2} \otimes \OO_{\PP^1}((1-m)d)
\to \V_{m,d}$.
Therefore, this map is an isomorphism, because 
it is clear from exactness of the equivariant complex that $V_{d-2} \otimes \OO_{\PP^1}((1-m)d)$ and $\V_{m,d}$ are vector
bundles on $\PP^1$ having the same rank and first Chern class.

\subsubsection{Proof of stability}

The isomorphism \eqref{V} implies plainly that $\W_{m,d}$ is slope-semistable.
Indeed, the pull-back of a subbundle of $\W_{m,d}$ having strictly
higher slope than $\W_{m,d}$ would be a subbundle of $V_{d-2} \otimes
\OO_{\PP^1}(d(1-m))$ again with strictly higher slope, which is absurd
because this bundle is semistable on $\PP^1$.

Now we prove that $\W_{m,d}$ is actually slope-stable. Indeed, assume
$\W_{m,d}$ has a non-trivial filtration by subsheaves, whose
quotients are slope-stable sheaves. We write the associated graded
object in the form:
\[
\gr(\W_{m,d}) = \bigoplus_{i=1}^s \FF_i^{\oplus r_i},
\]
where $r_i$ are positive integers and  $\FF_1,\ldots,\FF_s$ are stable
sheaves on $\PP^d$, with $\FF_i$  not isomorphic to $\FF_j$ for $i \ne j$.
For any $g \in G$, we get a linear automorphism of $\PP^d$ which we
still denote by $g$ and an
isomorphism $g^*(\W_{m,d}) \simeq \W_{m,d}$ which in turn induces an
automorphism $\gr(\W_{m,d}) \to \gr(\W_{m,d})$. Since $\FF_i \not\simeq \FF_j$ for $i \ne j$, we get isomorphisms
$\FF_i^{\oplus r_i} \to \FF_i^{\oplus r_i}$ and since each $\FF_i$ is
stable all such morphisms are of the form $g_i \otimes \id_{\FF_i}$,
for some linear isomorphism $g_i : \C^{r_i} \to \C^{r_i}$.
In other words, there are $G$-representations $R_1,\ldots,R_s$ such
that:
\[
\gr(\W_{m,d}) \simeq \bigoplus_{i=1}^s R_i \otimes \FF_i.
\]

Let us now look at the pull-back of the filtration to $\PP^1$ via
$v_{1,d}$. By semi-stability of $\V_{m,d}$ and homogeneity of the $\FF_i$ each
$\FF_i$ pulls back to $T_i \otimes \OO_{\PP^1}(d(1-m))$, for some
$G$-representation $T_i$.  So
the associated graded object is of the form 
\[
\gr(\V_{m,d}) \simeq \bigoplus_{i=1}^s R_i \otimes T_i \otimes
\OO_{\PP^1}(d(1-m)).
\]

Therefore, \eqref{V} gives $s=1$ and $R_1=V_{d-2}$ and $T_1\simeq \C$,
or $T_1\simeq V_{d-2}$ and $R_1 \simeq \C$. We want to exclude the
former case, the latter corresponding the fact that $\W_{m,d}$ is
stable.
But if $R_1=V_{d-2}$ and $T_1 \simeq \C$,
we get that $\FF_1$ is a line bundle on $\PP^d$ which implies immediately $\W_{m,d} \simeq
V_{d-2} \otimes \OO_{\PP^d}(1-m)$.
However, a straightforward computation on the equivariant complex shows
that the second Chern class of $\W_{m,d}(m-1)$ is non-zero, so this is
impossible (see the next subsection for more details on the Chern
classes of $\W_{m,d}$). The proof of Theorem \ref{risultato SL2} is thus complete.

\begin{rem}
For $d=3$ our result agrees with the resolution-theoretic approach to the construction of $\SL_2(\C)$-equivariant instantons achieved in \cite{dani_ist_omog}. In fact, in that paper the classification of instantons on $\PP^3$ which are invariant for any linear action of $\SL_2(\C)$ on $\PP^3$ was completed. No such classification is available to our knowledge for equivariant vector bundle on higher-dimensional projective spaces.
\end{rem}

We postpone to \S \ref{varie} a more detailed study of our $\SL_2(\C)$-equivariant bundles, where we will show in Proposition \ref{new} that, as soon as $d \ge 4$ and $m \ge 3$, the $\W_{m,d}$'s are not isomorphic to any previously known rank $d-1$ bundle on $\PP^d$.

\section{The general case} \label{G}

This section is devoted to the proof of Theorem \ref{risultato
  generale}; the steps are similar to what is done in the previous
case of $G=\SL_2(\C)$, however the situation is much more complicated this
time, as we have to deal with representations whose highest weight is not just a multiple of
$\lambda_1$, but a linear combination of $\lambda_1$, $\lambda_2$, and
in some cases also $\lambda_3$. We exploit the theory of representations of Lie groups and of quiver
representations. 

We illustrate
the proof in the next subsections; we start by recalling some facts
about the equivalence of categories between $G$-homogeneous bundles
and quiver representations. Then, similarly to what we did in the
previous section, we work on the pulled-back bundle. 

\subsection{Quiver representations} 
The category of $G$-homogeneous bundles on $\PP^n$ is naturally equivalent to the category of
representations of the parabolic subgroup $P$ of $G$ such that $\PP^n
\simeq G/P$; this equivalence sends indecomposable bundles to
irreducible representations. 

The semisimple part of $P$, denoted by $R$, is a copy of
$\SL_{n}(\C)$; an irreducible representation of $R$ can be extended to
$P$ by letting the unipotent part act trivially. Homogenous bundles
corresponding to these irreducible representation of $R$ are called
\textit{completely reducible}. A completely reducible bundle is
canonically determined by its $P$-dominant weight, which is of the
form  
$\lambda=a_1\lambda_1+a_2\lambda_2\cdots+a_n\lambda_n$, where $a_1 \in
\Z$ and $a_2,\ldots,a_n \in \N$, and is therefore denoted by
$\EE_\lambda \otimes \C^{k(\lambda)}$, where the tensor product
accounts for its multiplicity.  If the multiplicity is $1$ we simply
write $\EE_\lambda$ for $\EE_\lambda \otimes \C$.  

The natural numbers $a_i$ for $i \ge 2$ are the coefficients in the
basis $(\lambda_2,\ldots,\lambda_n)$ of the dominant weight of the
representation of $R$ corresponding to the bundle. Notice that, given
a weight $\lambda$ as above, if we set $\mu_i := a_i+\ldots+a_n$ for
$i=1,\ldots,n$, we have: 
\[
\mbox{$\EE_\lambda \simeq \Gamma^\mu \Omega_{\PP^n} \otimes \OO_{\PP^n}{\big(}\sum_{i=1}^n \mu_i {\big)}$,}
\]
where $\Gamma^{\mu}$ is the Schur functor defined by the partition
$\mu=(\mu_2 \ge \ldots \ge \mu_n)$, and $\Omega_{\PP^n}$ the cotangent
bundle. In particular,  
\[
\EE_{k\lambda_1+\nu} \simeq \EE_\nu(k), \qquad \EE_{\lambda_n} =
T_{\PP^n}(-1), \qquad \hbox{and} \quad \EE_{\lambda_2} = \Omega_{\PP^n}(2).
\]

A fundamental fact is that, when $\lambda$ is also $G$-dominant, that is, when $a_1 \ge 0$ in the definition of $\lambda$ above, the bundle $\EE_\lambda$ is globally generated and satisfies
\[
\HH^0(\PP^n,\EE_{\lambda})^G \simeq V_\lambda.
\]

Any $G$-homogeneous bundle $\EE$ on $\PP^n$ admits a filtration of the form:
\begin{equation}\label{filtration}
0 = \EE^s \subset \EE^{s-1} \subset \cdots \subset \EE^{1} \subset \EE^0 = \EE,  
\end{equation}
where $\FF^k=\EE^{k-1}/\EE^{k}$ is completely reducible for all $k=1,\ldots,s$.
We write:
\[
\gr(\EE)=\bigoplus_{k=1}^s \FF^k,
\]
the \emph{graded homogeneous vector bundle} associated with $\EE$. 

We can compute the graded bundle associated with $\Gamma^\sigma V \otimes \OO_{\PP^n}$ for any partition $\sigma=(\sigma_1 \ge \ldots \ge \sigma_n)$. The result is probably folklore, but the reader can find a detailed proof in \cite[Proposition 3]{re_pp_bundles}. We have that:
\begin{equation}\label{graduato schur}
\gr(\Gamma^\sigma V \otimes \OO_{\PP^n})= \bigoplus_{\nu} \Gamma^\nu \Omega_{\PP^n} \otimes \OO_{\PP^n}(|\sigma|),
\end{equation}
the sum extended over all $\nu$ obtained from $\sigma$ by removing any number of boxes from its Young diagram, with no two in any column. 

\medskip

There is a third category that is equivalent to the two above, namely
the category of finite dimensional representations of a certain quiver
$\Q_{\PP^n}$ with relations, see \cite{BK, OR, borasections}, and the
already quoted \cite{re_pp_bundles} among many other references. The
vertices of $\Q_{\PP^n}$ are given by $R$, the semisimple part of $P$,
and correspond to completely reducible bundles $\EE_\lambda$; we label
them with their highest weight $\lambda$. The arrows encode the
unipotent action, that is, the data of the extensions of $\FF^k$ and
$\FF^j$ induced by the filtration \eqref{filtration}, for
$k,j=1,\ldots,s$; we label them with the weights $\xi_1,\ldots,\xi_n$
of the cotangent bundle $\Omega_{\PP^n}$. The relations on
$\Q_{\PP^n}$ are the commutative ones.

\subsection{Exactness of the equivariant complex} As noticed before, Lemmas \ref{composizione nulla} and \ref{psi surj} together entail that the sequences \eqref{ex seq} and  \eqref{restr ex seq} are complexes, exact at the sides; once again, we need to show that exactness holds in the middle, as well. We work on the restricted
sequence \eqref{restr ex seq}, then extend the result to \eqref{ex seq} by semicontinuity. 

\smallskip

We split \eqref{restr ex seq} into two short exact sequences:
\begin{equation}\label{A}
0 \to \V_{m,d} \to  V_{(md-2)\lambda_1 + \lambda_2} \ot \OO_{\PP^n} \xrightarrow{\phi_{m,d}} \LL_{m,d},
\end{equation}
where $\LL_{m,d}$ is an $G$-homogeneous bundle on $\PP^n$,
defined as the kernel of $\psi_{m,d}$ by:
\begin{equation}
  \label{K}
0 \to \LL_{m,d} \to V_{(m-1)d} \ot \OO_{\PP^n}(d) \xrightarrow{\psi_{m,d}} \OO_{\PP^n}(md) \to 0.  
\end{equation}
 
We study the bundle $\LL_{m,d}$ more in detail; rewrite \eqref{K} twisted by $\OO_{\PP^n}(-d)$:
\begin{equation} \label{Kd}
0 \to \LL_{m,d}(-d) \to V_{(m-1)d} \ot \OO_{\PP^n} \to \OO_{\PP^n}((m-1)d) \to 0.  
\end{equation}

Two immediate remarks are in order. First, the bundle  $\LL_{m,d}^*(d)$ is
generated by an irreducible module of global sections, and this implies that $\LL_{m,d}$ is indecomposable. Second, the map induced on global sections by the surjection 
$V_{(m-1)d} \ot \OO_{\PP^n} \twoheadrightarrow \OO_{\PP^n}((m-1)d)$ is an isomorphism, and from this we deduce the cohomology vanishings:
\[
\HH^i(\PP^n,\LL_{m,d}(-d))=0, \qquad \mbox{for all $i \in \N$.}
\]

From formula \eqref{graduato schur} we compute:
\begin{equation} \label{Vell}
\gr(V_\ell \ot \OO_{\PP^n}) \simeq \bigoplus_{k=0}^\ell \EE_{(\ell-2k)\lambda_1+k\lambda_2}.  
\end{equation}

Combining \eqref{Kd} and \eqref{Vell}, and recalling that $\OO_{\PP^n}(m)=\EE_{m\lambda_1}$, we compute the graded bundle associated to $\LL_{m,d}(-d)$, and from it:
\begin{equation}\label{grad L}
\gr(\LL_{m,d}) \simeq \bigoplus_{k=1}^{(m-1)d} \EE_{(md-2k)\lambda_1 +k\lambda_2}.
\end{equation}

Therefore, in the filtration \eqref{filtration} of $\LL_{m,d}$, the index $s$ is $s=(m-1)d$, and the completely reducible quotients $\FF^k$ are:
\[
\FF^k=\LL_{m,d}^{k-1}/\LL_{m,d}^k \simeq \EE_{((m-1)d-2k)\lambda_1+k\lambda_2}.
\]

Notice that all summands in \eqref{grad L} are completely reducible homogeneous bundles and they all appear with multiplicity $1$ in the decomposition. Moreover the first summand (i.e.~$k=1$) is the only one satisfying:
\[
\HH^0(\PP^n,\EE_{(md-2)\lambda_1+\lambda_2}) \simeq V_{(md-2)\lambda_1+\lambda_2}.
\]

Let us now look at the associated quiver representation; we will denote by $[\EE]$ the representation associated to a homogeneous bundle $\EE$. Computing the action of the nilpotent part of the parabolic $P$, we see that the support of both quiver representations $[V_{(m-1)d} \ot \OO_{\PP^n}]$ and $[\LL_{m,d}]$ is connected with all arrows in the same direction, namely the one associated with the first weight $\xi_1$ of the cotangent bundle $\Omega_{\PP^n}$. In other words, the support of these two quiver representations is an \emph{$A_p$-type quiver} contained in:

\[
\xymatrix@R-4ex@C-2ex{
\:_{\scriptsize \substack{md\lambda_1\\}}& 
\:_{\scriptsize \substack{(md-2)\lambda_1\\+\lambda_2}}& 
\:_{\scriptsize \substack{(md-4)\lambda_1\\+2\lambda_2}}&
\ldots&
\:_{\scriptsize \substack{(2d+2-md)\lambda_1\\+(md-d-1)\lambda_2}}& 
\:_{\scriptsize \substack{(2d-md)\lambda_1\\+(md-d)\lambda_2}}\\
\bullet \ar[r]_-{\xi_1}&\bullet\ar[r]&\bullet\ar[r]_-{\xi_1}&\cdots\cdots
\ar[r]_-{\xi_1}&\bullet\ar[r]&\bullet
}
\]
The three quiver representations $[V_{(m-1)d} \ot \OO_{\PP^n}]$, $[\LL_{m,d}]$, and $[\OO_{\PP^n}(md)]$ associated to the bundles in \eqref{K} have dimension vector $[1\:1\:1\ldots1\:1]$, $[0\:1\:1\ldots1\:1]$, and 
$[1\:0\:0\ldots 0\:0]$ respectively.

\smallskip

Now consider the two equivariant morphisms 
\[
\xymatrix@R-4ex@C-2ex{ V_{(md-2)\lambda_1+\lambda_2} \ot \OO_{\PP^n} \ar[rr]^-{\phi_{m,d}} && \LL_{m,d} \ar@{>>}[rr]^-{\pi_{m,d}}& &\EE_{(md-2)\lambda_1+\lambda_2}
}
\]
and their composition. The induced map on global sections is an equivariant map of irreducible representations $V_{(md-2)\lambda_1+\lambda_2} \to V_{(md-2)\lambda_1+\lambda_2}$, so it is either zero or an isomorphism. If it were zero, then $\phi_{m,d}$
would factor through the kernel $\KK_{m,d}$ of $\pi_{m,d}$, whose graded object is  
\begin{equation}\label{grK}
\gr(\KK_{m,d}) \simeq \bigoplus_{k=2}^{(m-1)d} \EE_{(md-2k)\lambda_1+k\lambda_2}.  
\end{equation}
Hence we would get a non-zero map from an irreducible $G$-module to the space of global sections of a bundle whose graded object has no summands with this particular $G$-module as its space of global sections, a contradiction. 

We conclude that the composition $\pi_{m,d} \circ \phi_{m,d}$ induces an isomorphism on global sections. Therefore, it is a surjective morphism of sheaves because  $\EE_{(md-2)\lambda_1+\lambda_2}$ is completely reducible of multiplicity $1$ and associated with a dominant weight, and is thus globally generated.

\smallskip

Now let us finally prove that $\phi_{m,d}: V_{(md-2)\lambda_1+\lambda_2} \ot \OO_{\PP^n}
\to \LL_{m,d}$ is surjective, and hence that sequence \eqref{restr ex seq} is exact. Setting  $Q_{m,d}=\coker \phi_{m,d}$, we have the following commutative exact diagram:
\[
\xymatrix@-1ex{
0 \ar[r]  &\KK_{m,d}' \ar[d] \ar[r]& V_{(md-2)\lambda_1+\lambda_2} \ot \OO_{\PP^n} \ar[d]^{\phi_{m,d}} \ar[r]& \EE_{(md-2)\lambda_1+\lambda_2} \ar[r] \ar@{=}[d]& 0\\
0 \ar[r] & \KK_{m,d} \ar[d] \ar[r] & \LL_{m,d} \ar[r]^-{\pi_{m,d}} \ar[d] & \EE_{(md-2)\lambda_1+\lambda_2} \ar[r] & 0\\
 &Q_{m,d}'\ar[r] & Q_{m,d}}
\]
where all maps are equivariant, $\KK_{m,d}'$ is defined as kernel of the surjection $\pi_{m,d} \circ \phi_{m,d}$ above, and $Q_{m,d}'$ is the cokernel of the induced morphism $\KK_{m,d}' \to \KK_{m,d}$. The snake lemma implies that the map $Q_{m,d}' \to Q_{m,d}$ is an isomorphism.

Since $Q_{m,d}'$ is an equivariant quotient of $\KK_{m,d}$ and in view of
\eqref{grK}, the completely reducible
bundles occurring in $\gr(Q_{m,d}')$ are all of the form
$\EE_{(md-2k)\lambda_1+k\lambda_2}$, for some $k \in \{2,\ldots,(m-1)d\}$.
In particular for some $k \ge 2$ we get a surjection:
\begin{equation}\label{surjection}
\xymatrix@R-4ex@C-2ex{\LL_{m,d} \ar@{>>}[r] &\EE_{(md-2k)\lambda_1+k\lambda_2}.\\
[0,1,1,1,\ldots,1,1]&[0,0,0,1,\ldots,0,0]}
\end{equation}

Set $\GGG^k:=\LL_{m,d}/\LL^k$, with graded bundle:
\[
\gr(\GGG^k) \simeq \bigoplus_{j = 1}^k \FF^j.
\]
We observe that the surjection \eqref{surjection} necessarily factors through:
\[
\LL_{m,d} \to \GGG^k \to \FF^k \simeq  \EE_{(md-2k)\lambda_1+k\lambda_2},
\]
indeed the morphism $\LL_{m,d} \to \FF^k$ restricts to zero to $\LL_{m,d}^k$ because
\[
\gr(\LL_{m,d}^k) \simeq \bigoplus_{\ell = k+1}^{(m-1)d} \FF^\ell
\]
and none of these summands $\FF^\ell$ has non-trivial maps to $\FF^k$ for $k \ne \ell$.

Now, by definition of the filtration, we have an exact sequence
\[
0 \to \FF^k \to \GGG^k \to \GGG^{k-1} \to 0,
\]
and composing the injection $\FF^k \hookrightarrow \GGG^k$ with the surjection
$\GGG^k \twoheadrightarrow \FF^k$ we obtain an isomorphism. Indeed, if the composition were zero then the same would be true for map $\GGG^k \to \FF^k$, as no other summand of $\gr(\GGG^k)$ maps non-trivially to $\FF^k$.

We conclude that $\GGG^k \simeq \FF^k \oplus \GGG^{k-1}$, 
and this in turn entails a splitting $\LL_{m,d} \simeq \GGG^{k-1} \oplus \EE^{k-1}$, for some $k \ge 2$, which is a contradiction because we have already proved that $\LL_{m,d}$ is indecomposable.

\subsection{Indecomposability and rank computation} Notice that the vector bundle $\V_{m,d}$ is indecomposable, again because its dual $\V_{m,d}^\ast$ is generated by an irreducible module of global sections, and thus $\W_{m,d}$ is indecomposable, as well.

Moreover, $\rk(\W_{m,d})=\rk(\V_{m,d})$, and this rank, using the dimension formula \cite[p. 224]{FH}, equals:
\[
\dim V_{(md-2)\lambda_1+\lambda_2} - \dim V_{(m-1)d} + 1= (m^2d^2-1) - \tbinom{(m-1)d+2}{2} + 1.
\]

\subsection{Slope-stability and quiver $\mu$-stability} To conclude
the proof of Theorem \ref{risultato generale} we need to show that
$\W_{m,d}$ is slope-stable. Once again, we prove our result on the
closed orbit, and exploit the connection between slope-stability of a
homogeneous bundle and $\mu$-stability ``\`a la King'' of the
associated quiver representation, see \cite{king}.  

Given a homogenous vector bundle $\EE$ and its associated quiver representation $[\EE]$, we define
\[
\mu_{[\EE]}(-)=c_1(\gr (\EE)) \rk (-) - \rk(\gr (\EE))c_1(-),
\]
and call the representation $[\EE]$ \emph{$\mu$-stable} if for all
subrepresentations $[\EE']$ one has that $\mu_{[\EE]}(\gr(\EE')) \ge
0$ and $\mu_{[\EE]}(\gr(\EE')) = 0$ if and only $[\EE']$ is either
$[\EE]$ or $[0]$.  

From \cite[Theorem 7.2]{OR} we learn that $[\V_{m,d}]$ is $\mu$-stable
if and only if $\V_{m,d}=W \otimes \FF$, with $\FF$ a slope-stable
homogeneous bundle, and $W$ an irreducible $G$-module. Now, if we had
$\V_{m,d}=W \otimes \FF$, then the resolution of $\V_{m,d}$ (better
yet, of its dual) would be given by the resolution of $\FF$ tensored
by the irreducible representation $W$, a contradiction with
\eqref{restr ex seq}. Therefore to prove that $\V_{m,d}$ is
slope-stable we only need to show that the associated representation
$[\V_{m,d}]$ is $\mu$-stable. For this, we study $[\V_{m,d}]$ and its
subrepresentations.  

\medskip

Let us first suppose that $n \ge 3$. From the short exact sequence \eqref{A} we deduce the equality
\[
\gr(V_{(md-2) \lambda_1+\lambda_2} \otimes \OO_{\PP^n}) = \gr(\V_{m,d}) \oplus \gr(\LL_{m,d}).
\]

The graded bundle $\gr(\LL_{m,d})$ was already computed in \eqref{grad L}. Moreover, from formula \eqref{graduato schur} we have:
\begin{equation}\label{X}
\gr(V_{(md-2)\lambda_1+\lambda_2} \otimes \OO_{\PP^n}) \simeq
\bigoplus_{k=1}^{md-1} \left(\EE_{(md-1-2k)\lambda_1+(k-1)\lambda_2+\lambda_3} \oplus
\EE_{(md-2k)\lambda_1+k\lambda_2} \right),
\end{equation}
so all in all:
\begin{equation}\label{Y}
\gr(\V_{m,d})=\bigoplus_{k=1}^{md-1} \EE_{(md-1-2k)\lambda_1+(k-1)\lambda_2+\lambda_3} 
\oplus \bigoplus_{k=md-d+1}^{md-1} \EE_{(md-2k)\lambda_1+k\lambda_2},
\end{equation}
where we remark that all summands in both \eqref{X} and \eqref{Y} appear with multiplicity $1$. 

The subquiver of $\Q_{\PP^n}$ corresponding to the support of the representations $[\V_{m,d}]$, 
$[V_{(md-2) \lambda_1+\lambda_2} \otimes \OO_{\PP^n}]$, and $[\LL_{m,d}]$ is contained in:

\[
\xymatrix@R-4ex@C-3ex{
&\:_{\scriptsize \substack{(md-3)\lambda_1\\+\lambda_3}}&
\:_{\scriptsize \substack{(md-5)\lambda_1\\+\lambda_2\\+\lambda_3}}&&&&&&
\:_{\scriptsize \substack{(3-md)\lambda_1\\+(md-3)\lambda_2\\+\lambda_3}}
\\
&
\bullet\ar[r]^{\xi_1}&
\bullet\ar[r]&
\cdots\ar[r]&
\bullet\ar[r]&
\bullet\ar[rr]^{\xi_1}&&
\cdots\ar[r]&
\bullet\ar[r]^{\xi_1}&
\bullet\\
&&&&&&&&&\\&&&&&&&&&\\
\bullet\ar[r]^-{\xi_1}&
\bullet\ar[r] \ar[uuu]_-{\xi_2}&
\bullet\ar[r] \ar[uuu]&
\cdots\ar[r]&
\bullet\ar[r]^-{\xi_1} \ar[uuu]_-{\xi_2}&
\bullet\ar[rr]\ar[uuu]&&
\cdots\ar[r]&
\bullet \ar[uuu]_-{\xi_2}&\\
\:^{\scriptsize \substack{(md-2)\lambda_1\\+\lambda_2}}&
\:^{\scriptsize \substack{(md-4)\lambda_1\\+2\lambda_2}}&&&
\:^{\scriptsize \substack{(2d-md)\lambda_1\\+(md-d)\lambda_2}}&&&&
\:^{\scriptsize \substack{(2-md)\lambda_1\\+(md-1)\lambda_2}}&
}\]

The short exact sequence \eqref{A} is associated to the following sequence of quiver representations:
\[
\xymatrix@R-4ex{\V_{m,d} \ar@{^{(}->}[r]&V_{(md-2) \lambda_1+\lambda_2} \otimes \OO_{\PP^n}
\ar@{>>}[r] &\LL_{m,d}\\
{\scriptsize{\left[\setlength\arraycolsep{2pt}\begin{array}{ccccccccc}
&1&1&\cdots&1&1&\cdots&1&1\\
0&0&0&\cdots&0&1&\cdots&1&
\end{array}\right]}}&
{\scriptsize{\left[\setlength\arraycolsep{2pt}\begin{array}{ccccccccc}
&1&1&\cdots&1&1&\cdots&1&1\\
1&1&1&\cdots&1&1&\cdots&1&
\end{array}\right]}}&
{\scriptsize{\left[\setlength\arraycolsep{2pt}\begin{array}{ccccccccc}
&0&0&\cdots&0&0&\cdots&0&0\\
1&1&1&\cdots&1&0&\cdots&0&
\end{array}\right]}}
}
\]

Computing the action of the nilpotent part of the parabolic, it is easy to see that all maps of type $\C \to \C$ in the quiver representations mentioned above are non-zero. Indeed, if any of them were zero, then the support of the quiver representation would disconnect and hence the bundle would decompose, which we already know is impossible; this is apparent for the $\xi_1$-type arrows, whereas for the $\xi_2$-type arrows one needs to remember the commutativity relations holding on $\Q_{\PP^n}$.

\smallskip

What do the subrepresentations of $[\V_{m,d}]$ look like? 
The graded bundle $\gr(\V_{m,d})$ has two types of summands; we call $\A_k := 
\EE_{(md-1-2k)\lambda_1+(k-1)\lambda_2+\lambda_3}$ the first type, with $k=1,\ldots,md-1$, and $\B_k:=\EE_{(md-2k)\lambda_1+k\lambda_2}$ the second one, $k=md-d+1,\ldots,md-1$. 
For the reader's convenience, we re-draw the support of $[\V_{m,d}]$:

\[
\xymatrix@R-5ex@C-2.5ex{
&\:_{\scriptsize \substack{\A_1}}&
\:_{\scriptsize \substack{\A_2}}&&&&&&
\:_{\scriptsize \substack{\A_{md-2}}}&
\:_{\scriptsize \substack{\A_{md-1}}}
\\
&
\bullet\ar[r]&
\bullet\ar[r]&
\cdots\ar[r]&
\bullet\ar[r]&
\bullet\ar[r]&\cdots\ar[r]&
\bullet\ar[r]&
\bullet\ar[r]&
\bullet\\
&&&&&&&&&\\&&&&&&&&&\\ &&&&&&&&&\\
&&&&
\bullet\ar[r] \ar[uuuu]&
\bullet\ar[r]\ar[uuuu]&\cdots\ar[r]&
\bullet\ar[r]\ar[uuuu]&
\bullet \ar[uuuu]&\\
&&&&
\:^{\scriptsize \substack{\B_{md-d+1}}}&&&&
\:^{\scriptsize \substack{\B_{md-1}}}&
}\]

Any subrepresentation of $[\V_{m,d}]$ is either of the $A_p$-type: 
\[
{\scriptsize{\left[\setlength\arraycolsep{2pt}\begin{array}{ccccccccc}
0&0&\cdots&0&0&\cdots&0&0&1\\
&&&0&0&\cdots&0&0&
\end{array}\right]}}, \quad
{\scriptsize{\left[\setlength\arraycolsep{2pt}\begin{array}{ccccccccc}
0&0&\cdots&0&0&\cdots&0&1&1\\
&&&0&0&\cdots&0&0&
\end{array}\right]}}, \: \hbox{and so on until:} \:
{\scriptsize{\left[\setlength\arraycolsep{2pt}\begin{array}{ccccccccc}
1&1&\cdots&1&1&\cdots&1&1&1\\
&&&0&0&\cdots&0&0&
\end{array}\right]}},
\]
or else it is of the hook form:
\[
{\scriptsize{\left[\setlength\arraycolsep{2pt}\begin{array}{ccccccccc}
0&0&\cdots&0&0&\cdots&0&1&1\\
&&&0&0&\cdots&0&1&
\end{array}\right]}}, \quad
{\scriptsize{\left[\setlength\arraycolsep{2pt}\begin{array}{ccccccccc}
0&0&\cdots&0&0&\cdots&1&1&1\\
&&&0&0&\cdots&1&1&
\end{array}\right]}},\]
and so on until:
\[
[\V_{m,d}]={\scriptsize{\left[\setlength\arraycolsep{2pt}\begin{array}{ccccccccc}
1&1&\cdots&1&1&\cdots&1&1&1\\
&&&1&1&\cdots&1&1&
\end{array}\right]}},
\]
where again all maps of type $\C \to \C$ above are non-zero.

We have that $\A_k=\Gamma^{k,1}\Omega_{\PP^n} \otimes \OO_{\PP^n}(md)$, therefore, denoting by $\mu(\EE)$ the usual slope of sheaves, we compute that: 
\[
1\le i < j \le md -1 \Rightarrow \mbox{$\mu(\A_i)=md-\frac{(i+1)(n+1)}{n}$} > \mu(\A_j). 
\]

Therefore any nonzero subrepresentation  $[\EE']$ of $[\V_{m,d}]$ of
$A_p$-type will satisfy:
\[c_1(\gr (\V_{m,d})) \rk (\EE') - \rk(\gr (\V_{m,d})) c_1(\EE') > 0.\] 

For the hook type subrepresentations we compute that, since $\B_k=S^k\Omega_{\PP^n} \otimes \OO_{\PP^n}(md)$, \mbox{$\mu(\B_i)=md-\frac{i(n+1)}{n}$}. In particular:
\[
md-d+1\le i < j \le md -1 \Rightarrow \mu(\B_i)=\mu(\A_{i-1})> \mu_(\A_{j-1})=\mu(\B_j). 
\]
Therefore all these nonzero subrepresentation $[\EE']$ of $[\V_{m,d}]$ satisfy $c_1(\gr (\V_{m,d})) \rk (\EE') - \rk(\gr (\V_{m,d})) c_1(\EE') \ge 0$. The fact that the only hook subrepresentation
of $[\V_{m,d}]$ with $\gr(\EE') = \gr(\V_{m,d})$ is $[\V_{m,d}]$ itself concludes the proof of stability for $n \ge 3$.

\medskip

In the case $n=2$,  all summands of type $\A_k$ in formulas \eqref{X} and \eqref{Y} must be substituted with summands of type 
$\EE_{(md-1-2k)\lambda_1+(k-1)\lambda_2}$, and the quiver representations (and subrepresentations) look the same as in the previous case. Also, $\mu(\B_k)=md-\frac{3}{2}k$ and 
\mbox{$\mu(\A_k)=md -\frac{3}{2}(k-1)$}, therefore the same argument as above applies.

\smallskip

This concludes the proof of Theorem \ref{risultato generale}.

\section{Further remarks} \label{varie}

Let us point out some cohomological features of our equivariant bundles. 
We denote by $\HH^p_*(\FF)$ the cohomology module $\oplus_{t\in
  \Z}\HH^p(\PP^N,\FF(t))$ of a coherent sheaf $\FF$ over $\PP^N$.
This is an artinian module over the polynomial ring $R=\C[x_0,\ldots,x_N]$ if $\FF$ is locally free and $0 < p < N$.
Recall that, once the values of $n$ and $d$ are fixed, we have $N=\binom{d+n}{n}-1$ and
for all $m \ge 1$ the vector bundles $\W_{m,d}$ are defined on
$\PP^N$. Their cohomology modules are $G$-homogeneous.
We assume throughout that $N \ge 3$ and $m \ge 2$.

\begin{lem} \label{ciclico}
  The  $R$-module $\HH^2_*(\W_{m,d})$ is cyclic, generated in degree $-m$, with $V_{(m-1)d}$ as space of
  minimal degree relations in degree $1$. Also, $\HH^p_*(\W_{m,d})=0$ for $p=3,\ldots,N-1$.
\end{lem}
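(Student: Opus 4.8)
The plan is to read off the cohomology module $\HH^2_*(\W_{m,d})$ directly from the length-two equivariant resolution \eqref{ex seq}, by splitting it into two short exact sequences and exploiting the absence of intermediate cohomology for trivial bundles and line bundles on $\PP^N=\PP V_d$ (here $N\ge 3$). First I would set $\cZ:=\Ker(\Psi_{m,d})=\im(\Phi_{m,d})$, so that exactness of \eqref{ex seq} (already established in the main theorems) gives
\[
0 \to \W_{m,d} \to V_{(md-2)\lambda_1+\lambda_2}\ot\OO_{\PP V_d} \to \cZ \to 0, \qquad 0 \to \cZ \to V_{(m-1)d}\ot\OO_{\PP V_d}(1) \xrightarrow{\Psi_{m,d}} \OO_{\PP V_d}(m) \to 0.
\]
Since every summand of the form $W\ot\OO_{\PP^N}$ and the line bundles $\OO(1),\OO(m)$ have vanishing intermediate cohomology, the long exact sequence of the second sequence yields $\HH^q_*(\cZ)=0$ for $2\le q\le N-1$. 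Feeding this into the long exact sequence of the first sequence produces simultaneously the vanishing $\HH^p_*(\W_{m,d})=0$ for $3\le p\le N-1$ and, around $p=2$, the isomorphism $\HH^2_*(\W_{m,d})\simeq\HH^1_*(\cZ)$, since $\HH^1_*(V_{(md-2)\lambda_1+\lambda_2}\ot\OO)=0$ and $\HH^2_*(V_{(md-2)\lambda_1+\lambda_2}\ot\OO)=0$ for $N\ge 3$. This disposes of the vanishing claim.

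The heart of the matter is then to describe $\HH^1_*(\cZ)$ as a graded $R$-module. Using $\HH^1_*(V_{(m-1)d}\ot\OO(1))=0$, the long exact sequence of the second short exact sequence identifies $\HH^1_*(\cZ)$ with the cokernel of the $R$-module map
\[
\psi: V_{(m-1)d}\ot \HH^0_*(\OO_{\PP V_d}(1)) \longrightarrow \HH^0_*(\OO_{\PP V_d}(m))
\]
induced by $\Psi_{m,d}$. Here $\HH^0_*(\OO_{\PP V_d}(m))\simeq R(m)$ is cyclic, generated by the constant section sitting in degree $-m$; hence its quotient $\HH^2_*(\W_{m,d})\simeq\coker(\psi)$ is cyclic with generator in degree $-m$, exactly as stated.

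It remains to pin down the space of minimal relations. By construction $\im(\psi)$ is the submodule generated by $\psi(V_{(m-1)d}\ot 1)=\Psi_{m,d}(V_{(m-1)d})$, i.e. by the entries of the presentation matrix $\Psi_{m,d}$, and the source $V_{(m-1)d}\ot \HH^0_*(\OO(1))=V_{(m-1)d}\ot R(1)$ provides the first syzygy module, with its generating relations carried by the twist $R(1)$ (degree $1$). The presentation is automatically minimal because these relations lie in the lowest degree in which $\im(\psi)$ is nonzero, there being no relations below it where $R(m)$ is still free. Finally, $\Psi_{m,d}(V_{(m-1)d})$ is a $G$-equivariant image of the irreducible module $V_{(m-1)d}$, so by Schur's lemma it is either $0$ or all of $V_{(m-1)d}$; it is nonzero because $\Psi_{m,d}\neq 0$ by Lemma \ref{psi surj}, whence the minimal-degree relations form precisely $V_{(m-1)d}$.

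The main obstacle I anticipate is not conceptual but bookkeeping: one must keep the grading shifts and the $G$-module labels perfectly aligned across the two long exact sequences, and in particular verify that the natural presentation $V_{(m-1)d}\ot R(1)\xrightarrow{\psi}R(m)\to\HH^2_*(\W_{m,d})\to 0$ is already minimal, so that $V_{(m-1)d}$ genuinely measures the minimal relations and is neither an over- nor an under-count. The equivariant structure is what makes this clean: Schur's lemma forces the relation space to be the whole irreducible $V_{(m-1)d}$ as soon as it is nonzero, bypassing any explicit manipulation of the entries of $\Psi_{m,d}$.
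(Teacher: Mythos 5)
Your proposal is correct and takes essentially the same route as the paper: the paper also splits \eqref{ex seq} at $\cZ_{m,d}=\im(\Phi_{m,d})$, uses the absence of intermediate cohomology of the trivial and line-bundle terms to get $\HH^p_*(\W_{m,d})\simeq \HH^{p-1}_*(\cZ_{m,d})$ (hence zero for $p=3,\ldots,N-1$), and realizes $\HH^2_*(\W_{m,d})\simeq\HH^1_*(\cZ_{m,d})$ as a cyclic quotient of $R(m)\simeq\HH^0_*(\OO_{\PP V_d}(m))$ generated in degree $-m$. Your identification of the minimal-degree relations as the image of the nonzero, hence injective (Schur), equivariant map $V_{(m-1)d}\to S^{m-1}V_d$ is exactly the paper's argument, phrased there as the vanishing $\HH^0(\cZ_{m,d}(-1))=0$ together with $\HH^1(\cZ_{m,d}(-1))\simeq S^{m-1}V_d/V_{(m-1)d}$.
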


\begin{proof}
  We split the the equivariant complex \eqref{ex seq} into two short exact sequences and put 
  $\cZ_{m,d}=\im(\Phi_{m,d})$. It is easy to see that $\HH^p_*(\cZ_{m,d})=0$ for $p=2,\ldots,N-1$. Moreover, we deduce that $\HH^1(\cZ_{m,d}(t))=0$ for $t \ge -m-1$, and that $R(m)\simeq \HH^0_*(\OO_{\PP V_d}(m))$  surjects onto
  $\HH^1_*(\cZ_{m,d})$, so that this module is cyclic, generated in degree $-m$.

  Next, we have $\HH^0(\cZ_{m,d}(t))=0$ for $t\le -2$ and thus, for $t=-1$,
  we get that $\HH^0(\cZ_{m,d}(-1))$ is the kernel of the map $V_{(m-1)d} \to S^{m-1} V_d$ induced by $\Psi_{m,d}$. 
  By construction this map is injective, so $\HH^0(\cZ_{m,d}(-1))=0$, and therefore $\HH^1(\cZ_{m,d}(-1))$ is the quotient of $S^{m-1} V_d$ by $V_{(m-1)d}$, which is to say, the module of relations of $\HH^1_*(\cZ_{m,d})$ contains 
  $V_{(m-1)d}$, sitting in degree $1$, and no relations of smaller degree.

  Finally, it is clear that $\HH^p_*(\W_{m,d}) \simeq \HH^{p-1}_*(\cZ_{m,d})$ for all $p=2,\ldots,N-1$, so the lemma is proved.
\end{proof}

\medskip

We now focus our attention on the case of binary forms, when $n=2$ and $N=d$; as we mentioned in the introduction, few classes of indecomposable vector bundles of rank $d-1$ on $\PP^d$ are known, namely the classical mathematical instantons, Tango bundles, their generalizations by Cascini \cite{cascini:tango} and Bahtiti \cite{bahtiti-correlation,bahtiti:tango,bahtiti:2n+1}, and the Sasakura bundle of rank $3$ on $\PP^4$ \cite{anghel:sasakura}. 

In what follows we show that as soon as $d \ge 4$ our equivariant
bundles are new, except for a single case that we illustrate. Set
$N_{m,d,k}=\binom{(m-1)d+k}{k}$.

\smallskip

\begin{lem}\label{classi}
The normalized bundle associated to $\W_{m,d}$ is $\NN_{m,d}=\W_{m,d}(m-1)$ with Chern classes:
\[
\mbox{$c_k(\mathcal{N}_{m,d}) =  (-1)^k N_{m,d,k} m^k + \sum_{i=0}^{k-1} (-1)^i  \frac{(m-1)d + (k-i)}{md-(k-i)} \binom{md-1}{k-i} N_{m,d,i} (m-1)^{k-i-1} m^{i+1}$.}
\]
\end{lem}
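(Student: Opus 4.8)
The plan is to read off the total Chern class of $\NN_{m,d}$ from a locally free resolution by line bundles and then extract $c_k$ by a binomial expansion together with one elementary binomial identity; no geometry beyond the exactness of the equivariant complex is needed.

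First I would tensor the complex \eqref{ex seq sl2}, which Theorem \ref{risultato SL2} guarantees is exact, by $\OO_{\PP^d}(m-1)$, obtaining the exact sequence
\[
0 \to \NN_{m,d} \to V_{md-2}\ot\OO_{\PP^d}(m-1) \to V_{(m-1)d}\ot\OO_{\PP^d}(m) \to \OO_{\PP^d}(2m-1)\to 0.
\]
Writing $H$ for the hyperplane class on $\PP^d$ and using $\dim V_{md-2}=md-1$ together with $\dim V_{(m-1)d}=(m-1)d+1$, multiplicativity of the total Chern class (i.e. the fact that $[\NN_{m,d}]=(md-1)[\OO(m-1)]-((m-1)d+1)[\OO(m)]+[\OO(2m-1)]$ in $K$-theory) gives
\[
c(\NN_{m,d})=\frac{(1+(m-1)H)^{md-1}\,(1+(2m-1)H)}{(1+mH)^{(m-1)d+1}}.
\]
As a consistency check one verifies that the $H^1$-coefficient vanishes, confirming that the twist by $\OO(m-1)$ indeed normalizes $\W_{m,d}$, since $c_1(\W_{m,d})=-(m-1)(d-1)$.

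Next I would expand the three factors as power series in $H$: the denominator via $(1+mH)^{-((m-1)d+1)}=\sum_{j\ge0}(-1)^j N_{m,d,j}\,m^j H^j$, and the numerator via the ordinary binomial theorem. I would first combine the two numerator factors and simplify the coefficient of $H^\ell$: using $\binom{md-1}{\ell}=\tfrac{md-\ell}{\ell}\binom{md-1}{\ell-1}$ and $md=(m-1)d+d$, the two contributing terms collapse to $\tfrac{m\,((m-1)d+\ell)}{\ell}(m-1)^{\ell-1}\binom{md-1}{\ell-1}$ for $\ell\ge1$ (and to $1$ for $\ell=0$). Convolving this with the denominator expansion, isolating the $\ell=0$ term — which produces precisely the leading summand $(-1)^k N_{m,d,k}\,m^k$ — and reindexing the remaining sum by $i=k-\ell$ yields the stated formula, once one rewrites $\binom{md-1}{s-1}=\tfrac{s}{md-s}\binom{md-1}{s}$ with $s=k-i$ so as to turn $\tfrac{(m-1)d+s}{s}\binom{md-1}{s-1}$ into $\tfrac{(m-1)d+s}{md-s}\binom{md-1}{s}$.

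The power-series expansions are routine; the only genuine work lies in the two elementary binomial manipulations just described, which are exactly where the somewhat unexpected rational factor $\tfrac{(m-1)d+(k-i)}{md-(k-i)}$ of the statement originates. I therefore expect the main (and essentially only) obstacle to be the bookkeeping: keeping the $\ell=0$ contribution separate and correctly reindexing the convolution so that the single clean closed form emerges from what is a priori a product of three series.
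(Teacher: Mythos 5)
Your proposal is correct and matches the paper's (very terse) proof in substance: the paper splits \eqref{ex seq sl2} into two short exact sequences and applies the Whitney formula, which is exactly the multiplicativity statement you invoke in $K$-theory, yielding the same total Chern class $c(\NN_{m,d})=(1+(m-1)H)^{md-1}(1+(2m-1)H)(1+mH)^{-((m-1)d+1)}$, after which the paper's ``fairly cumbersome computation'' is precisely the binomial bookkeeping you carry out explicitly. Your derivation checks out (e.g.\ it gives $c_1=0$ and reproduces $c_3(\NN_{m,d})=-\tfrac{d-3}{3}m(m-1)(2m-1)$ used in Proposition \ref{new}), so it is a faithful, more detailed version of the paper's argument.
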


\begin{proof}
The statement follows from splitting \eqref{ex seq sl2} into two short exact sequences followed by a fairly cumbersome computation.
\end{proof}

\begin{prop}\label{new}
 For $n=2$, and $d \ge 4$, the  bundle $\W_{m,d}$ is not isomorphic to any of the following bundles, even up to dualizing and taking pull-backs by finite self-maps of $\PP^d$:
 \begin{enumerate}[label=\roman*)]
 \item \label{insta} mathematical instanton bundles;
 \item \label{tango} weighted generalized Tango bundle, except if $m=2$, in which case $\W_{2,d}^*(-1)$ is a Tango bundle;
 \item \label{sasa} the Sasakura bundle, for $d=4$.
\end{enumerate}
 \end{prop}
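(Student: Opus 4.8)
The overall strategy is to distinguish $\W_{m,d}$ from each of the three families by comparing discrete invariants that are preserved (or transform controllably) under the two allowed operations: dualizing and pulling back by a finite self-map of $\PP^d$. A finite self-map of $\PP^d$ is given by $d$-tuples of homogeneous forms of some common degree $\delta$, and such a pull-back multiplies Chern roots by $\delta$; in particular it scales the rank-normalized Chern data and the splitting type on lines in a predictable way, while dualizing negates the odd Chern classes of the normalized bundle. The plan is therefore to isolate invariants that are rigid enough to survive these operations but still separate the bundles.

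First I would treat case \ref{insta}, the mathematical instantons. The cleanest separator here is cohomology. Mathematical instanton bundles on $\PP^d$ (of rank $d-1$) are defined by the vanishing $\HH^1_*(E)=0$ together with the monad/instanton cohomological conditions, and in particular they are \emph{arithmetically} well-behaved in a way that $\W_{m,d}$ is not: by Lemma \ref{ciclico} the module $\HH^2_*(\W_{m,d})$ is nonzero (cyclic, generated in degree $-m$), so $\W_{m,d}$ carries intermediate cohomology that an instanton lacks. Since both dualizing and pulling back by a finite map preserve the property ``some intermediate cohomology module $\HH^p_*$ is nonzero for $1<p<N$'' (pull-back by a finite flat map is faithful on cohomology, and Serre duality exchanges $\HH^p_*$ with $\HH^{N-p}_*$ of the dual), this invariant cannot be destroyed. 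I would use Lemma \ref{ciclico} to pin down exactly which $\HH^p_*$ are nonzero and check it is incompatible with the instanton cohomology table.

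Next, case \ref{tango}. Here Chern classes do the work, via Lemma \ref{classi}. The idea is to compute $c_1$ and $c_2$ of the normalized bundle $\NN_{m,d}=\W_{m,d}(m-1)$ and compare with the corresponding normalized invariants of the weighted generalized Tango bundles, whose Chern classes are known explicitly from \cite{cascini:tango,bahtiti-correlation,bahtiti:tango,bahtiti:2n+1}. Because pull-back by a degree-$\delta$ map sends $c_k(\NN)$ to $\delta^k c_k(\NN)$ (after renormalizing), the ratio $c_2/c_1^2$ (or a similar scale-invariant combination) is a genuine invariant, and dualizing only flips the sign of $c_1$; matching these rational invariants should force agreement of the two families only in the exceptional case, which I would then analyze directly. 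The borderline case $m=2$ is expected to be the delicate one: here the numerics of Lemma \ref{classi} collapse and $\W_{2,d}^*(-1)$ genuinely coincides with a Tango bundle, so the proof must exhibit this isomorphism (most naturally by matching the defining resolution \eqref{ex seq sl2} at $m=2$ against Tango's monad) rather than merely separating invariants. This matching step is where I expect the main obstacle to lie, since it requires identifying our equivariant presentation with the classical Tango construction rather than just comparing numbers.

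Finally, case \ref{sasa} is a finite check: the Sasakura bundle is a single rank-$3$ bundle on $\PP^4$, so it is only relevant when $d=4$ and $\rk \W_{m,4}=3$, i.e.\ for the specific $(m,d)$ giving rank $3$. I would simply compare the Chern classes from Lemma \ref{classi} against the known $c_1,c_2$ of the Sasakura bundle (again up to the dualize-and-pull-back normalization), and observe the mismatch. Since there are only finitely many admissible $(m,d)$ and the self-maps only rescale Chern data by integer powers $\delta^k$, a short arithmetic argument rules out coincidence. The only real subtlety across all three cases is ensuring the invariants I use are genuinely stable under \emph{both} dualizing and arbitrary finite self-maps simultaneously; once that is verified, each separation reduces to a cohomological vanishing (for \ref{insta}) or an explicit Chern-class inequality (for \ref{tango} and \ref{sasa}).
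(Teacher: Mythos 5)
Your meta-strategy (find invariants stable under both dualizing and finite pull-backs) is exactly the paper's, but you have swapped the tools: the paper handles case \ref{insta} by Chern classes (odd Chern classes of instantons vanish, while Lemma \ref{classi} gives $c_3(\NN_{m,d})=-\frac{(d-3)}{3}m(m-1)(2m-1)\neq 0$) and handles \ref{tango}, \ref{sasa} by cohomology modules, whereas you propose cohomology for \ref{insta} and Chern classes for \ref{tango}, \ref{sasa}. Your route for \ref{insta} is viable but rests on a false premise: mathematical instantons do \emph{not} satisfy $\HH^1_*(E)=0$ (the charge is $\dim\HH^1(E(-1))>0$). What you actually need, and what is true, is that the defining monad $\OO_{\PP^d}(-1)^k\to\OO_{\PP^d}^{2k+d-1}\to\OO_{\PP^d}(1)^k$ forces $\HH^p_*(E)=0$ for $2\le p\le d-2$; then the nonvanishing of $\HH^2_*(\W_{m,d})$ from Lemma \ref{ciclico} (and, for the dual, the nonvanishing of $\HH^{d-2}_*(\W_{m,d}^*)$ obtained by Serre duality) does separate $\W_{m,d}$ from instantons, since your observations that pull-back by a finite map preserves (non)vanishing of $\HH^p_*$ and that duality swaps $p\leftrightarrow d-p$ are correct. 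With that repair, your part \ref{insta} is a legitimate alternative to the paper's $c_3$ computation.

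The genuine gap is in part \ref{tango} (and it propagates to \ref{sasa}). First, your proposed invariant $c_2/c_1^2$ is degenerate: the normalized bundle $\NN_{m,d}=\W_{m,d}(m-1)$ has $c_1=0$ (immediate from \eqref{ex seq sl2}, or from Lemma \ref{classi}), as do the normalized Tango-type bundles, so the ratio is undefined; moreover normalization does not commute with pull-back, so ``renormalizing after $f^*$'' is not the clean rescaling you describe. More fundamentally, Chern data gives you no mechanism to \emph{force} the parameters: the bundles $\FF_{\gamma,\alpha,\beta}$ form a three-parameter family, to be compared after an arbitrary twist, dual, and pull-back of arbitrary degree, and your plan defers precisely this step (``matching these rational invariants should force agreement only in the exceptional case''). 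The paper's proof supplies the missing forcing argument through the graded structure of the cohomology modules: $\HH^1_*(\FF_{\gamma,\alpha,\beta})=0$ while $\HH^1(\W_{m,d})\simeq V_{md-4}\oplus\cdots\oplus V_{(m-2)d}\neq 0$ kills the non-dual comparison; for the dual, cyclicity of $\HH^2_*$ and its generation degree (Lemma \ref{ciclico}) force $3\gamma=2m-1$, and then counting minimal relations ($(m-1)d+1$ of them for $\NN_{m,d}$ versus $d+1$ of degrees $d\alpha+i(\beta-\alpha)-2\gamma$ for $\FF^*$) forces $m=2$, $\gamma=1$, $\alpha=\beta=0$, which is exactly the exceptional Tango case. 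Likewise for \ref{sasa} the paper compares generation degrees of $\HH^2_*$ of the Sasakura bundle and its dual with Lemma \ref{ciclico}, getting contradictory equations, rather than doing Chern arithmetic. Without an analogue of this module-theoretic step, your plan stalls at the decisive point.
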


\begin{proof}
Let us prove \ref{insta}.
The odd Chern classes of mathematical instanton bundles vanish; on the other hand, from Lemma \ref{classi} above we get that:
\[
\mbox{$c_3(\NN_{m,d})=-\frac{(d-3)}{3}m(m-1)(2m-1)$,}
\] 
which does not vanish for $d \ge 4$ and $m \ge 2$. 

\bigskip

 Let us now prove \ref{tango}. Given integers $\gamma > 0$ and $\alpha \ge \beta$, we refer to
  $\FF_{\gamma,\alpha,\beta}$ as the vector bundle arising from Bahtiti's construction of generalized weighted 
  Tango bundles; this includes Cascini's weighted Tango bundles, which are indeed homogeneous for
  $\SL_2(\C)$ acting on the space of binary forms.
  Let us recall that $\FF=\FF_{\gamma,\alpha,\beta}$ fits into the long
  exact sequence:
  \[
  0 \to \OO_{\PP^d}(-3\gamma) \to
  \bigoplus_{i=0}^d\OO_{\PP^d}(d\alpha+i(\beta-\alpha)-2\gamma) \to
  \bigoplus_{i=0}^{2d-1}\OO_{\PP^d}(2d\alpha+i(\beta-\alpha)-\gamma) \to \FF \to 0.
  \]
  The image of the middle map is usually denoted by $Q_{\gamma,\alpha,\beta}(-2\gamma)$.

  We borrow the notation from the proof of Lemma \ref{ciclico} and use essentially the same argument and the fact that
  $d\ge 4$, to show that $\HH^2_*(Q_{\gamma,\alpha,\beta})=0$ and 
  consequently $\HH^1_*(\FF)=0$. Also, by stability of $\W_{m,d}$ we
  have $\HH^0(\W_{m,d})=0$ so that $\HH^1(\W_{m,d})$ is the cokernel of an injective $G$-equivariant map
   $V_{md-2} \to \HH^0(\cZ_{m,d})$. By construction, we get:
   \[
   \HH^1(\W_{m,d}) \simeq V_{md-4} \oplus \cdots \oplus V_{(m-2)d}.
   \]
   We conclude that $\FF_{\gamma,\alpha,\beta}$ is not isomorphic to $\W_{m,d}$.

   \medskip
   We turn now our attention to the dual bundle $\FF^*$ of $\FF_{\gamma,\alpha,\beta}$. 
   Let us assume that $\FF^*$ is isomorphic to some twist of $\W_{m,d}$ and prove that this forces 
   $\W_{m,d}$ to be a Tango bundle and $m=2$, $\gamma=1$, $\alpha=\beta=0$.
   Because of Lemma \ref{classi}, and since $c_1(\FF)=0$, already we should have $\NN_{m,d} \simeq \FF^*$.

   By the long exact sequence above, using again the argument of Lemma \ref{ciclico}, we see that 
   the $R$-module $\HH^2_*(\FF^*)$ is cyclic, generated in degree
   $-3\gamma$. This, together with Lemma \ref{ciclico}, implies $3\gamma=2m-1$.

   Next, the statement on the relations of the module $\HH^2_*(\W_{m,d})$ given in Lemma \ref{ciclico} 
   implies that the kernel of $R(2m-1) \to \HH^2_*(\NN_{m,d})$ has $(m-1)d+1$
   generators in degree $-m$. On the other hand, the kernel of the epimorphism $R(3\gamma) \to \HH^2_*(\FF^*)$ 
   has $d+1$ generators, the $i$-th generator being of degree $d\alpha+ i(\beta-\alpha)-2\gamma$ for $i=0,\ldots,d$.
   This implies that $m=2$ (hence $\gamma=1$) and that $d\alpha + i(\beta-\alpha) =0$ for all $i$, which taken at 
   $i=0$ and $i=d$ says $\alpha=\beta=0$, so $\FF_{\gamma,\alpha,\beta}$ is a Tango bundle.

\smallskip

   The converse implication is clear: indeed if $m=2$ we get that $\W_{2,d}^*(-1)$ is a Tango bundle on $\PP^d$ 
   and fits in the dual of the exact sequence in \cite[\S 2]{cascini:tango}.

\medskip
 Finally, let us consider the case of finite self-maps $f$ of $\PP^d$. If $f$ is defined by homogeneous polynomials of degree $e$, then the resolutions of $f^*\W_{m,d}$ and $f^*\FF_{\gamma,\alpha,\beta}^*$ are pull-back by $f$ of the
 resolutions of $\W_{m,d}$ and $\FF_{\gamma,\alpha,\beta}^*$. Note that a line bundle of the form $\OO_{\PP^d}(p)$ appearing in any of these resolutions is pulled-back by $f$ to $\OO_{\PP^d}(ep)$. So our argument excluding that 
 $\W_{m,d}$ and $\FF_{\gamma,\alpha,\beta}^*$ are isomorphic remains valid as all factors appearing in that argument get multiplied by $e$.
 Also, given our map $f$ we have:
 \[
 f_*(\OO_{\PP^d}) \simeq \bigoplus_{i=0}^s\OO_{\PP^d}(a_i),
 \]
 for some integers $s$ and $0=a_1>a_2 \ge \cdots \ge a_s$. By the projection formula, we exclude directly that 
 $\W_{m,d} \simeq \FF_{\gamma,\alpha,\beta}$ as $\HH^1_*(f^*\W_{m,d})\ne 0$ but 
 $\HH^1_*(f^*\FF_{\gamma,\alpha,\beta})=0$. 
 
Finally we prove \ref{sasa}. Let $\mathcal{S}$ denote the Sasakura
bundle; from the monadic description given in \cite{anghel:sasakura}
we compute that the cohomology module $\HH^2_*(\mathcal{S})$ is
generated in degree $-4$. The cohomology module $\HH^2_*(\mathcal{S^*})$ of the dual bundle is
isomorphic to the previous one up to a shift, and is generated in degree $1$. Applying the same reasoning as in part
$(ii)$, we see that Lemma \ref{ciclico} implies that $2m-1=-4$ in the case of $\mathcal{S}$, and $m=1$ in the case of $\mathcal{S}^*$, and none of these two are possible. \qedhere 
\end{proof}

\bibliographystyle{amsalpha}
\bibliography{adp}

\end{document}